\newtheorem{theorem}{{\bf{\small T}{\scriptsize HEOREM}}}[section]
\newtheorem*{maintheorem}{{\bf{\small M}{\scriptsize AIN THEOREM}}}
\newtheorem{corollary}{{\bf{\small C}{\scriptsize OROLLARY}}}[section]
\newtheorem{proposition}{{\bf{\small P}{\scriptsize ROPOSITION}}}[section]
\newtheorem{lemma}{{\bf{\small L}{\scriptsize EMMA}}}[section]
\newtheorem{remark}{{\bf{\small R}{\scriptsize EMARK}}}[section]
\newtheorem{definition}{{\bf{\small D}{\scriptsize EFINITION}}}[section]
\renewenvironment{proof}[1]
{\noindent{{\bf{\small{ P}{\scriptsize ROOF}}}.}\hspace{0.1cm} #1} {$\;\qed$\newline}
\newenvironment{sketchofproof}[1]
{\noindent{{\bf{\small{ S}{\scriptsize KETCH OF THE PROOF}}}.}\hspace{0.1cm} #1} {$\;\qed$\newline}
\def\R{\mathds R}
\def\N{\mathds N}
\def\Z{\mathds Z}
\newcommand{\Zd}{\mathds Z^d}
\def\un{\mathds{1}}
\def\E{\mathds E}
\def\bgamma{\boldsymbol\gamma}
\newcommand{\dd}{\mathop{}\!\mathrm{d}}
\newcommand{\tv}{{\scriptscriptstyle \mathrm{TV}}}
\DeclareMathOperator{\e}{\mathrm{e}}
\font\gfont=cmmi8 scaled \magstep{1.2}
{2}
\newcommand{\gdelta}{\hbox{\gfont \char14}}
\newcommand{\db}{\bar{\mathrm{d}}}
\newcommand{\geta}{\hbox{\gfont \char17}}
\newcommand{\gmu}{\hbox{\gfont \char22}}
\newcommand{\gsigma}{\hbox{\gfont \char27}}
\newcommand{\gcb}[1]{\mathrm{GCB}\!\left(#1\right)}
\begin{document}

\title{Gaussian concentration and uniqueness of equilibrium states in lattice systems}

\author[1]{J.-R. Chazottes
\thanks{Email: \texttt{jeanrene@cpht.polytechnique.fr}}}

\author[1,3]{J. Moles
}

\author[2]{F. Redig
}

\author[3]{E. Ugalde
}

\affil[1]{Centre de Physique Th\'eorique, CNRS, Institut Polytechnique de Paris, France}
\affil[2]{Institute of Applied Mathematics, Delft University of Technology, 
The Netherlands}
\affil[3]{Instituto de F\'{\i}sica, Universidad Aut\'onoma de San Luis Potos\'{\i}, 
M\'exico}

\date{Dated: \today}

\maketitle

\begin{abstract}
We consider equilibrium states (that is, shift-invariant Gibbs measures) on the configuration space $S^{\Zd}$ where $d\geq 1$ and $S$ is a finite
set.
We prove that if an equilibrium state for a shift-invariant uniformly summable potential satisfies a Gaussian concentration bound,
then it is unique. Equivalently, if there exist several equilibrium states for a potential, none of them can satisfy such a bound.

\smallskip

\noindent {\footnotesize{\bf Keywords and phrases:} concentration inequalities, relative entropy, blowing-up property, equilibrium states,
large deviations, Hamming distance.}
\end{abstract}

\maketitle


\tableofcontents

\section{Introduction and main result}

The phenomenon we are interested in, which goes under the name of ``concentration inequalities'', is that if a function of
many ``weakly dependent'' random variables does not depend too much on any of them, then it is concentrated around its
expected value.
A key feature of this phenomenon is that it is \textit{non-asymptotic}, in contrast with the usual \textit{limit} theorems
where the number of random variables has to tend to infinity. Recall that the three main types of classical limit theorems are
the law of large numbers, the central limit theorem, and large deviations.
Another key feature of concentration inequalities is that they allow to deal with functions of random variables defined in an arbitrary
way, provided they are ``smooth enough'', in contrast with classical limit theorems which deal with sums of random variables.
Concentration inequalities made a paradigm shift in probability and statistics, but also in discrete mathematics, in geometry and in
functional analysis, see e.g. \cite{blm,dp,ledoux,Wbook}.

In this paper, we consider Gibbs measures on the configuration space $\Omega=S^{\Zd}$ where $S$ is a finite set
and $d\geq 1$. Postponing precise definitions till next section, a probability measure $\mu$ satisfies a Gaussian concentration bound
if there exists a constant $D>0$ such that, for any local function $F:\Omega\to\R$,
\begin{equation}\label{gcbineq-intro}
\int \e^{F-\int F\dd\mu} \dd\mu\leq \e^{D\sum_{x\in \Lambda_{\scaleto{F}{3pt}}} \delta_x(F)^2}
\end{equation}
where $\Lambda_F$ is the (finite) set of sites $x\in\Zd$ such that $\delta_x(F)\neq 0$, and $\delta_x(F)$ is the largest value of $|F(\omega)-F(\omega')|$
taken over the configurations $\omega$ and $\omega'$ differing only at site $x$. Note that $D$ does not depend on $F$, and in particular not on
$\Lambda_{\scaleto{F}{4pt}}$.
By a standard argument (recalled later on), \eqref{gcbineq-intro}
implies a control on the fluctuations of $F$ around $\int F\dd\mu$: for all $u>0$, we have
\begin{equation}\label{intro-dev-ineq}
\mu\Big\{\omega\in \Omega : F(\omega)\geq  \int F\dd\mu+u \Big\} \leq
\exp\left(-\frac{u^2}{4D\sum_{x\in \Lambda_{\scaleto{F}{3pt}}} \delta_x(F)^2}\right).
\end{equation}
For instance, if $S=\{-1,+1\}$, $F$ can be $\sum_{x\in\Lambda_n} \omega_x/|\Lambda_n|$, which is the magnetization inside the ``cube''
$\Lambda_n=[-n,n]^d\cap \Zd$. One can check immediately that the previous bound is exponentially small in the volume $|\Lambda_n|=(2n+1)^d$.
But we can consider much more general $F$'s, and in particular nonlinear or implicitly defined functions.
The bound \eqref{gcbineq-intro} was first proved in \cite{Kul} for potentials satisfying Dobrushin's uniqueness condition, with a
constant $D$ explicitly related to Dobrushin's interdependence matrix. This covers, for instance, finite-range potentials at sufficiently high temperature.
Not surprisingly, one cannot expect that a Gaussian concentration holds for the (ferromagnetic) Ising model at temperatures below
the critical one, because of the surface-order large deviations of the magnetization (see \cite{CCKR} for more details).
In \cite{CCKR}, the authors proved that a ``stretched-exponential'' concentration bound holds for the ``$+$'' phase and the ``$-$'' phase
of this model at sufficiently low temperature.
Concerning the various applications of these concentration bounds, we will not describe them here and refer the reader to \cite{Kul,CCR}.

What happens for the Ising model raises the following general question:
\begin{quote}
Suppose that a potential admits several Gibbs measures. Is it true that none of these measures satisfy a Gaussian concentration bound?
\end{quote}
Equivalently, we ask:
\begin{quote}
If a Gibbs measure for a given potential satisfies a Gaussian concentration bound, is it unique?
\end{quote}
In this paper, we prove the following result which answers this question in the case of \textit{shift-invariant} Gibbs measures, that is,
\textit{equilibrium states}.\footnote{In the class of potentials we consider, shift-invariant Gibbs measures coincide with equilibrium states \cite[Theorem 4.2]{ruelle2004}.}
\begin{maintheorem}
If an ergodic equilibrium state for a shift-invariant absolutely summable potential satisfies a Gaussian concentration
bound, then it must be the unique equilibrium state for this potential.
\end{maintheorem}
Our theorem can be paraphrased by saying that nonuniqueness of the equilibrium states for a potential prevents Gaussian concentration.
Observe that the statement is about \textit{ergodic} equilibrium states because, as we prove below, if a shift-invariant probability satisfies a
Gaussian concentration bound, then it must be ergodic (Proposition \ref{GCBimpliesergodicity}).

We will prove the above theorem in two different ways. \newline
The first way is based on ideas which were put forward in ergodic theory to study the existence of finitary codings from a finite-valued i.i.d. process to certain
ergodic processes. Two central notions turn out to be the ``blowing-up property'' and the ``positive (lower) relative entropy property''.
Without going into detail, it was proved in \cite{martonshields} that if a process is finitely determined then it has the blowing-up property, which in turn implies
the positive relative entropy property. Here, we use the fact that the Gaussian concentration bound implies the blowing-up property, and then use part of the
variational principle which says that the relative entropy of two distinct equilibrium states for the same potential is equal to zero. Hence the blowing-up
property cannot hold, therefore it is not possible to have a Gaussian concentration bound. In fact, we establish an abstract result (Theorem
\ref{abstracttheorem}) which states that if a probability measure satisfies a Gaussian concentration bound, then it has the positive lower relative entropy property.
Technically speaking, we follow some methods that can be found in \cite{martonshields}. The passage from $d=1$ to $d\geq 2$ (that is, going from processes to
random measures) poses no  difficulty. Although these methods are known to specialists of ergodic theory, they are probably not as well-known in the mathematical physics
literature.
For this  reason and to make this article self-contained, we give a detailed and complete presentation of the methods. Let us also mention that the line of reasoning
we follow here was used for the first time in \cite{CGT} in the context of chains of unbounded memory, a.k.a., chains with complete connections.
Using the methods of \cite{martonshields}, it was proved in \cite{vdberg-steif} that, for Markov random fields (including the nearest neighbour Ising model),
a phase transition is an obstruction for the finitely determined property. A natural question is thus: How does the Gaussian concentration bound relate
to the finitely determined property? A reasonable conjecture is that the former implies the latter, but we have not investigated this question.\newline
The other way of proving the main theorem is via large deviations. Let us sketch the proof. Let us first recall that, for equilibrium states, one can prove a ``large
deviation principle'' for the empirical measure with a ``rate function'' which is the relative entropy. Roughly speaking, the content of such a result is that, if we take an equilibrium state $\mu$, and an ergodic probability measure $\nu$ different from $\mu$, then
\begin{equation}\label{ld-intro}
\text{Prob}_{\mu}\big( \omega_{\Lambda_n}\;\text{is typical for}\;\nu\big)\asymp \e^{-|\Lambda_n| h(\nu|\mu)}
\end{equation}
where $h(\nu|\mu)$ is the relative entropy of $\nu$ with respect to $\mu$, which will be precisely defined below. The symbol ``$\asymp$'' means asymptotic
equivalence on the logarithmic scale in the following sense: $a_{\Lambda_n}\asymp b_{\Lambda_n}$ means
$\lim_{n\to+\infty}|\Lambda_n|^{-1}\log a_{\Lambda_n} = \lim_{n\to+\infty}|\Lambda_n|^{-1}\log b_{\Lambda_n}$.
What ``typical'' means is in relation to the ergodic theorem: If $\omega_{\Lambda_n}$ is typical for $\nu$, this means that the average
$|\Lambda_n|^{-1}\sum_{x\in\Lambda_n} f(\theta_x \omega)$
of every continuous function $f:\Omega\to\R$ under the shift (translation) over $\Lambda_n$ converges to $\int f\dd\nu$, as $n\to+\infty$.
The set of typical configurations for $\nu$ is of measure $0$ for $\mu$, and thus what \eqref{ld-intro} roughly says is that the probability that a
configuration $\omega$, taken from the probability distribution $\mu$, looks in $\Lambda_n$ like a typical configuration from $\nu$ decays
exponentially in the volume $|\Lambda_n|=(2n+1)^d$.
Coming back to our main result, the idea behind its proof is then simple: If $\mu$ and $\nu$ are two distinct (ergodic) equilibrium states for the same potential, then $h(\nu|\mu)=0$, whence the probability in \eqref{ld-intro} decays sub-exponentially in $\Lambda_n$.
Since $\mu\neq \nu$, there exists a local function $f$ such that $\int f\dd\nu=\int f\dd\mu+\varepsilon$ for some $\varepsilon>0$.

Now, suppose that $\mu$ satisfies a Gaussian concentration bound. Then $|\Lambda_n|^{-1}\sum_{x\in\Lambda_n} f\circ \theta_x$ must sharply
concentrate around $\int f\dd\mu$, in the sense that the $\mu$-probability that this average is larger than $\int f\dd\mu+\varepsilon$ (call this event
$E_n$), decays exponentially fast in $|\Lambda_n|$. This is a consequence of \eqref{intro-dev-ineq}.
But, at the same time, by the large deviation principle, the $\mu$-probability that this average
is concentrated around $\int f\dd\nu$ (call this event $E'_n$) decays only sub-exponentially fast. Since $E'_n\subset E_n$, we get a contradiction
since $\mu(E'_n)\leq \mu(E_n)$, which is incompatible with the fact that $\mu(E'_n)$ is sub-exponentially small in $|\Lambda_n|$, whereas
$\mu(E_n)$ is exponentially small in $|\Lambda_n|$.

Let us comment on the two proofs of the main theorem. The one based on large deviations is short and simple, given that we have a large deviation principle at our
disposal. Moreover, it works for more general spin spaces $S$, in particular non-discrete spaces.
The other proof is longer, but it puts forward the blowing-up property which is a remarkable property.
It also connects Gaussian concentration with the positivity of the (lower) relative entropy (Theorem \ref{abstracttheorem}), which is of independent interest (and not tied to equilibrium states).

Finally, let us mention that, strictly speaking, we do not consider hard-core lattice gas models. But our result extends to that situation by ad-hoc
modifications. Notice that the first proof works as well, whereas the proof based on large deviations also works, provided we invoke a large deviation
principle proved in \cite{EKW}.

\section{Setting}

We set some basic notation.
The configuration space is $\Omega=S^{\Zd}$, where $S$ is a finite set, and $d$ an integer greater than or equal to $1$.
We endow $\Omega$ with the product topology that is generated by cylinder sets, which makes it a compact metrizable space.
We denote by $\mathfrak{B}$ the Borel $\sigma$-algebra which coincides with the $\sigma$-algebra generated by cylinder sets.

An element $x$ of $\Zd$ (hereby called a site) can be written as a vector $(x_1,\ldots,x_d)$ in the canonical base of the lattice $
\Zd$.
If $\Lambda$ is a finite subset of $\Zd$, denote by $|\Lambda|$ its cardinality.
If $\Lambda$ is a finite subset of $\Zd$, we will write $\Lambda \Subset \Zd$.

For $\Lambda\subset\Zd$, $\gsigma,\geta\in \Omega$, we denote by $\gsigma_\Lambda\geta_{\Lambda^{\!c}}$ the configuration which
agrees with $\gsigma$ on $\Lambda$ and with $\geta$ on $\Lambda^{\!c}$. We denote by $\mathfrak{B}_\Lambda$ the $\sigma$-algebra
generated by the coordinate maps $\omega\mapsto \omega_x$ when $x$ is restricted to $\Lambda$.
We need to define centered ``cubes'': for every $n\in\Z_+$, let
\[
 \Lambda_n=\big\{x\in\Zd : -n\leq x_i\leq n,\,i=1,2,\ldots,d\big\}.
\]
Given $\Lambda\Subset\Zd$, an element $p_\Lambda$ of $S^{\Lambda}$ is called a pattern with shape $\Lambda$, or simply a
pattern. We will write $p_n$ for a pattern in $S^{ \Lambda_n}$.
We will also consider elements of $S^\Lambda$ as configurations restricted to $\Lambda$.
We will simply write $\omega$ instead of $\omega_\Lambda$ since we will always make clear to which set $\omega$ belongs.
A pattern $p_n\in S^{ \Lambda_n}$ determines a cylinder set $[p_n]=\{\omega\in\Omega : \omega_{ \Lambda_n}=p_n\}$.
More generally, given $\Lambda\subset \Zd$ and $C\subseteq S^\Lambda$, let $[C]=\{\omega\in\Omega: \pi_\Lambda(\omega)\in C\}$
where $\pi_\Lambda$ is the projection from $\Omega$ onto $S^\Lambda$.

Finally, the shift action $\Theta=(\theta_x,x\in\Zd)$ is defined as usual: for each $x\in\Zd$, $\theta_x:\Omega\to \Omega$
and $(\theta_x\,\omega)_y=\omega_{y-x}$, for all $y\in\Zd$.
A probability measure $\nu$ on $(\Omega,\mathfrak{B})$ is shift invariant if $\nu\circ \theta_x=\nu$ for all $x\in\Z^d$.
We denote by $\EuScript{M}_\Theta(\Omega)$ the set of shift-invariant probability measures, which is a simplex.
A shift-invariant probability measure is ergodic if it is trivial on the $\sigma$-algebra of all shift-invariant
events $\{A\in \mathfrak{B}: \theta_x A=A\;\textup{for all}\;x\in\Z^d\}$. Ergodic measures are the extreme points of
$\EuScript{M}_\Theta(\Omega)$; we denote them by $\mathrm{ex}\,\EuScript{M}_\Theta(\Omega)$.


We now define what we mean by a Gaussian concentration bound.
Let $F:\Omega\to\R$ be a function and $x\in\Zd$. The oscillation of $F$ at $x$ is defined by
\[
\gdelta_x (F) = \sup\big\{ |F(\omega) - F(\omega')| : \omega,\omega'\in \Omega\;\textup{differ only at site}\;x\big\}.
\]
Given $\Lambda\Subset \Zd$ and
two configurations $\omega,\eta\in\Omega$ such that  $\omega_{\Lambda^c}=\eta_{\Lambda^c}$, one has
\[
|F(\omega)-F(\eta)| \leq \sum_{x\in\Lambda} \delta_x(F)\, \un_{\{\omega_x\neq\eta_x\}}.
\]
We introduce the space of local functions
\[
\mathscr{L}=\bigcup_{\Lambda \Subset \Z^d} \mathscr{L}_\Lambda
\]
where $F:\Omega\to\R$ belongs to $\mathscr{L}_\Lambda$
if there exists $\Lambda\Subset\Zd$ (the dependence set of $F$) such that for all $\omega,\widetilde\omega,\widehat\omega$,
$F(\omega_{\Lambda}\widetilde\omega_{\Lambda^c}) = F(\omega_{\Lambda}\widehat\omega_{\Lambda^c})$.
(This means that $F$ is $\mathfrak{B}_{\Lambda}$-measurable.)
Equivalently, this means that $\delta_x(F)=0$ for all $x\notin \Lambda$.
It is understood that $\Lambda$ is the smallest such set.
Local functions are continuous, hence bounded since $\Omega$ is compact. (In fact, continuous functions are obtained as uniform limits of local functions.)

We write $\ushort{\gdelta}(F)$ for the infinite array $(\gdelta_x(F), x\in\Zd)$, and let
\[
\|\ushort{\gdelta} (F)\|_2^2:=
\|\ushort{\gdelta} (F)\|_{\ell^2(\Zd)}^2= \sum_{x\in\Zd} \gdelta_x(F)^2.
\]
We use the notation $\E_\mu$ for the integration with respect to $\mu$.

\begin{definition}[Gaussian concentration bound]
\leavevmode\\
Let $\mu$ be a probability measure on $(\Omega,\mathfrak{B})$.
We say that it satisfies a Gaussian concentration bound if there exists $D=D(\mu)>0$ such that,
for all functions $F\in\mathscr{L}$, we have
\begin{equation}\label{eq:gemb}
\E_\mu \big[\exp\left(F - \E_\mu (F)\right)\big] \leq \exp\left(D\|\ushort{\delta} (F) \|_2^2\right).
\end{equation}
For the sake of brevity, we will say that $\mu$ satisfies $\gcb{D}$, or simply \textit{GCB}.
\end{definition}
A key point in this definition is that $D$ is independent of $F$, in particular it is independent of the size of the
dependence set of $F$.
\begin{remark}
Let us notice that if \eqref{eq:gemb} holds for all local functions, then it holds for a larger class of functions, namely
continuous functions such that $\|\ushort{\delta} (F)\|_2<+\infty$ (see \cite{CCR}).
\end{remark}
Inequality \eqref{eq:gemb} easily implies the following tail inequality that we will use several times.
\begin{proposition}\label{prop-exp-dev}
\leavevmode\\
If a probability measure $\mu$ on $(\Omega,\mathfrak{B})$ satisfies $\gcb{D}$ then, for all $u>0$,
\begin{align}
\label{eq:expcon1}
& \mu\big\{\omega\in \Omega : F(\omega)- \E_\mu (F)\geq u \big\} \leq
\exp\left(-\frac{u^2}{4D\|\ushort{\delta}(F) \|_2^2}\right).
\end{align}
\end{proposition}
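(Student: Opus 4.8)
The plan is to run the standard exponential Chebyshev (Cram\'er--Chernoff) scheme: scale $F$ by a positive parameter $\lambda$, apply Markov's inequality to the exponential, use $\gcb{D}$ on the scaled function, and then optimize over $\lambda$.

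First I would dispose of the degenerate case $\|\ushort{\delta}(F)\|_2=0$. In that case $\delta_x(F)=0$ for every $x$, so $F$ is constant, $F-\E_\mu(F)\equiv 0$, and the left-hand side of \eqref{eq:expcon1} vanishes for every $u>0$, while the right-hand side is read as $\e^{-\infty}=0$; the inequality holds trivially. So I may assume $\|\ushort{\delta}(F)\|_2>0$. Next, I would record the elementary scaling behaviour: for any $\lambda>0$ the function $\lambda F$ is again local with the same dependence set as $F$, and $\delta_x(\lambda F)=\lambda\,\delta_x(F)$ for every $x$, whence $\|\ushort{\delta}(\lambda F)\|_2^2=\lambda^2\|\ushort{\delta}(F)\|_2^2$.

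Applying Markov's inequality to the nonnegative random variable $\exp\!\big(\lambda(F-\E_\mu(F))\big)$ and then the bound $\gcb{D}$ to the local function $\lambda F$ (using $\E_\mu(\lambda F)=\lambda\,\E_\mu(F)$) yields, for every $\lambda>0$,
\begin{equation*}
\mu\{F-\E_\mu(F)\geq u\}\leq \e^{-\lambda u}\,\E_\mu\big[\e^{\lambda F-\E_\mu(\lambda F)}\big]\leq \exp\big(-\lambda u + D\lambda^2\|\ushort{\delta}(F)\|_2^2\big).
\end{equation*}
Finally I would optimize the exponent over $\lambda>0$. The quadratic $\lambda\mapsto -\lambda u + D\lambda^2\|\ushort{\delta}(F)\|_2^2$ attains its minimum at $\lambda^\star=u/\big(2D\|\ushort{\delta}(F)\|_2^2\big)>0$, and substituting this value gives precisely $-u^2/\big(4D\|\ushort{\delta}(F)\|_2^2\big)$, which is the claimed bound \eqref{eq:expcon1}.

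There is no genuine obstacle here: this is the textbook exponential-moment argument, and the only points deserving a word of care are the scaling identity $\delta_x(\lambda F)=\lambda\,\delta_x(F)$ (which produces the factor $\lambda^2$ in the exponent) and the trivial constant case. The crucial structural input is that the constant $D$ in $\gcb{D}$ does \emph{not} depend on $F$, so it is unchanged when $F$ is replaced by $\lambda F$; this is exactly what legitimizes the optimization over $\lambda$ and forces the sharp Gaussian tail.
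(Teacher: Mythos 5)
Your proof is correct and follows essentially the same route as the paper's: Markov's inequality applied to $\exp\bigl(\lambda(F-\E_\mu(F))\bigr)$, the bound $\gcb{D}$ applied to $\lambda F$ with the scaling $\|\ushort{\delta}(\lambda F)\|_2^2=\lambda^2\|\ushort{\delta}(F)\|_2^2$, and optimization over $\lambda>0$. Your extra remarks on the degenerate constant case and on $D$ being independent of $F$ are sound but not needed beyond what the paper's one-line argument already uses.
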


\begin{proof}
If $F\in\mathscr{L}$ then, for any $\lambda>0$, $\lambda F$ obviously belongs to $\mathscr{L}$.
Applying Markov's inequality and \eqref{eq:gemb} we get
\begin{align*}
\mu\left\{\omega\in \Omega : F(\omega) - \E_\mu (F) \geq u \right\}
&\leq  \exp\left(-\lambda u\right)\ \E_\mu\big[\exp\left(\lambda(F - \E_\mu (F)\right) \big]\\
& \leq \exp\left(-\lambda u+D\|\ushort{\delta} (F) \|_2^2\, \lambda^2\right).
\end{align*}
Minimizing over $\lambda>0$ yields \eqref{eq:expcon1}.
\end{proof}
Observe that \eqref{eq:expcon1} can be applied to $-F$, which gives the same bound, and thus by a union bound
we get \eqref{eq:expcon1} with $|F(\omega) - \E_\mu (F)|$ by multiplying the bound by $2$.

The next result shows that a shift-invariant probability measure which satisfies GCB must be mixing.
\begin{proposition}\label{GCBimpliesergodicity}
\leavevmode\\
Let $\mu$ be a shift-invariant probability measure on $(\Omega,\mathfrak{B})$ which satisfies GCB.
Then $\mu$ is mixing.
\end{proposition}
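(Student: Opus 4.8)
The plan is to establish mixing in its equivalent formulation as decay of correlations on the dense class of local functions: for all $f,g\in\mathscr{L}$ one wants
\[
\E_\mu\big[f\cdot(g\circ\theta_x)\big]\longrightarrow \E_\mu[f]\,\E_\mu[g]\qquad(|x|\to\infty).
\]
The single consequence of GCB that I will use is a variance bound. Applying \eqref{eq:gemb} to $tF$ with $t>0$ (so that $\|\ushort{\delta}(tF)\|_2^2=t^2\|\ushort{\delta}(F)\|_2^2$), taking logarithms and comparing the second-order terms of both sides as $t\to0$ (licit since $F$ is bounded, so its moment generating function is smooth) yields
\[
\mathrm{Var}_\mu(F)\le 2D\,\|\ushort{\delta}(F)\|_2^2\qquad\text{for every }F\in\mathscr{L}.
\]

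It suffices to treat the autocorrelation of a single mean-zero local function $g$, namely to show that $c(z):=\E_\mu[g\cdot(g\circ\theta_z)]\to0$ as $|z|\to\infty$; the passage to a general pair $f,g$ is routine, via the complex mixed spectral measure, whose absolute continuity follows from that of the individual ones. Since $z\mapsto c(z)$ is positive definite on $\Zd$, the Herglotz--Bochner theorem furnishes a finite positive measure $\sigma_g$ on the dual torus $\mathbb{T}^d$ with $c(z)=\int_{\mathbb{T}^d}\e^{2\pi\mathrm{i}\,z\cdot\theta}\,\dd\sigma_g(\theta)$. The desired decay $c(z)\to0$ is precisely the statement that $\sigma_g$ is a Rajchman measure, which holds whenever $\sigma_g$ is absolutely continuous with respect to Lebesgue measure $\dd\theta$ (Riemann--Lebesgue lemma). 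Thus the task reduces to proving $\sigma_g\ll\dd\theta$.

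The heart of the argument is to feed finite linear combinations of translates into the variance bound. For finitely supported real $(a_y)$, set $F=\sum_y a_y\,(g\circ\theta_y)\in\mathscr{L}$. On one side, $\mathrm{Var}_\mu(F)=\sum_{y,y'}a_y a_{y'}\,c(y-y')=\int_{\mathbb{T}^d}|\hat a(\theta)|^2\,\dd\sigma_g(\theta)$, where $\hat a(\theta)=\sum_y a_y\,\e^{2\pi\mathrm{i}\,y\cdot\theta}$. On the other side, $\delta_z(F)\le\sum_y|a_y|\,\delta_{z+y}(g)$, so $\ushort{\delta}(F)$ is dominated coordinatewise by a convolution of $|a|$ with $\ushort{\delta}(g)$, and Young's inequality gives $\|\ushort{\delta}(F)\|_2\le\|a\|_{\ell^2}\,\|\ushort{\delta}(g)\|_{\ell^1}$, the latter factor being finite because $g$ is local. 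Since $\|a\|_{\ell^2}^2=\int_{\mathbb{T}^d}|\hat a|^2\,\dd\theta$ by Parseval, the variance bound becomes
\[
\int_{\mathbb{T}^d}|\hat a(\theta)|^2\,\dd\sigma_g(\theta)\le C\int_{\mathbb{T}^d}|\hat a(\theta)|^2\,\dd\theta,\qquad C:=2D\,\|\ushort{\delta}(g)\|_{\ell^1}^2,
\]
for every trigonometric polynomial $\hat a$.

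Finally I would promote this family of inequalities to the measure domination $\sigma_g\le C\,\dd\theta$, which gives absolute continuity (indeed a density bounded by $C$) and hence mixing. Given a nonnegative $h\in C(\mathbb{T}^d)$, the function $\sqrt{h}$ is continuous, hence a uniform limit of trigonometric polynomials $\hat a_n$ by Stone--Weierstrass; letting $n\to\infty$ in the displayed inequality (with $|\hat a_n|^2\to h$ uniformly) yields $\int h\,\dd\sigma_g\le C\int h\,\dd\theta$, and since $h\ge0$ is arbitrary, $\sigma_g\le C\,\dd\theta$. I expect this last step to be the only genuinely delicate point: in dimension $d\ge2$ one may not write an arbitrary nonnegative trigonometric polynomial as a single modulus square $|\hat a|^2$ (the Fej\'er--Riesz factorization fails), so the tempting route through nonnegative polynomials is closed; approximating $\sqrt{h}$ rather than $h$ sidesteps this obstruction. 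Everything else---the variance bound, Young's inequality, the Herglotz--Bochner representation, and Riemann--Lebesgue---is standard.
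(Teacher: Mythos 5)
Your proof is correct, but it takes a genuinely different route from the paper's. The paper argues softly: combining Proposition \ref{prop-exp-dev} with Lemma \ref{lem:deltasums}, it first extracts a weak law of large numbers, namely $\frac{1}{|V_n|}\sum_{x\in V_n}g\circ\theta_x\to\E_\mu(g)$ in $\mu$-probability for \emph{arbitrary} finite sets $V_n$ with $|V_n|\to\infty$; then, assuming mixing fails, it picks $y_n$ with $|y_n|\to\infty$ and $\int f\cdot(g\circ\theta_{y_n})\dd\mu\to a\neq 0$ along a subsequence, notes that the Ces\`aro averages $\int f\cdot\frac{1}{n}\sum_{k=0}^{n}(g\circ\theta_{y_k})\dd\mu$ also converge to $a$, and reaches a contradiction by dominated convergence, since $\frac{1}{n}\sum_{k=0}^{n}g\circ\theta_{y_k}\to 0$ in probability. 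You instead prove absolute continuity of the spectral measure: using only the variance inequality \eqref{devroye-ineq} --- a much weaker consequence of GCB than the full exponential bound --- you show $\sigma_g\le 2D\|\ushort{\gdelta}(g)\|_1^2\,\dd\theta$ for every mean-zero local $g$, and conclude by Riemann--Lebesgue. Amusingly, both proofs turn on the same Young-inequality estimate: Lemma \ref{lem:deltasums} is exactly your bound $\|\ushort{\gdelta}(F)\|_2\le\|a\|_{\ell^2}\|\ushort{\gdelta}(g)\|_{\ell^1}$ specialized to the coefficients $a=\un_{\Lambda}$. What your route buys is a strictly stronger conclusion from a weaker hypothesis (absolutely continuous spectrum on mean-zero functions, with an explicit density bound, rather than mixing alone); what the paper's route buys is brevity and economy of tools (no Herglotz--Bochner, no spectral theorem for unitary representations of $\Zd$ in the polarization step, no Stone--Weierstrass).

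One small repair is needed in your last step. Your displayed inequality is established for real coefficients $(a_y)$, but uniform limits of real-coefficient trigonometric polynomials are exactly the Hermitian functions $f(\theta)=\overline{f(-\theta)}$ (even real part, odd imaginary part), so a general $\sqrt{h}$ is \emph{not} such a limit. Two one-line fixes: (i) extend the inequality to complex $(a_y)$ by splitting $F$ into real and imaginary parts --- since $\E_\mu\big(|F|^2\big)=\mathrm{Var}_\mu(\mathrm{Re}\,F)+\mathrm{Var}_\mu(\mathrm{Im}\,F)$ and $\|a\|_{\ell^2}^2=\|\mathrm{Re}\,a\|_{\ell^2}^2+\|\mathrm{Im}\,a\|_{\ell^2}^2$, the same constant $C$ works; or (ii) observe that $c(-z)=c(z)$ forces $\sigma_g$ to be symmetric under $\theta\mapsto-\theta$, so it suffices to test against even $h$, in which case $\sqrt{h}$ is even, hence Hermitian, hence a uniform limit of real-coefficient trigonometric polynomials. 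With either repair the argument is complete.
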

\begin{proof}
First we remark that by Lemma \ref{lem:deltasums}, combined with Proposition \ref{prop-exp-dev}, we conclude that for every sequence $V_n, n\in \N$ of finite
subsets of $\Zd$ such that $|V_n|\to\infty$ as $n\to\infty$, and for every local function $f$, we have that 
$\frac{1}{|V_n|}\sum_{x\in V_n} \theta_x f$ converges to $\E_\mu(f)$ in $\mu$-probability as $n\to\infty$.

We then argue by contradiction. Assume that $\mu$ is not mixing. Then
there exist local functions $f, g$ (without loss of generality both
of $\mu$ expectation zero) and a sequence $x_n$, with $|x_n|\to\infty$ such that
\[
\int f \theta_{x_n} g \dd\mu
\]
does not converge to zero as $n\to\infty$. By locality, both functions $f,g$ are uniformly bounded, and
therefore the sequence $\int f \theta_{x_n} g \dd\mu, n\in \N$ is a bounded sequence. Therefore, there exists a subsequence
$y_n$ such that along that subsequence
\begin{equation}\label{bimbo}
\lim_{n\to\infty}\int f \theta_{y_n} g \dd\mu=a\not= 0.
\end{equation}
As a consequence,
\[
\lim_{n\to\infty}\int f \frac1n\sum_{k=0}^n\theta_{y_k} g \dd\mu=a\not=0.
\]
However, as we saw before,  $\frac1n\sum_{k=0}^n\theta_{y_k} g$ converges to zero in probability by GCB. Then
via dominated convergence we obtain
\[
\lim_{n\to\infty}\left|\int f \frac1n\sum_{k=0}^n\theta_{y_k} g \dd\mu\right|= 0
\]
which contradicts \eqref{bimbo}.
\end{proof}

\begin{remark}
A stronger property than mixing is tail-triviality of $\mu$, which in the context of Gibbs measures is equivalent with the
fact that $\mu$ is an extreme point of the set of Gibbs measures.
Though we strongly believe that GCB implies tail triviality and even more that $\mu$ is the unique Gibbs measure, at present we are not able to prove this.
\end{remark}

Let us finish this section by a variance inequality implied by \eqref{eq:gemb}. If $\mu$ satisfies $\gcb{D}$ then
\begin{equation}\label{devroye-ineq}
\mathrm{Var}_{\gmu}(F):=\E_{\mu}\big(F^2\big)-\E_{\mu}\big(F\big)^2\leq 2D \|\ushort{\delta} (F) \|_2^2
\end{equation}
for all functions $F\in\mathscr{L}$. The proof goes as follows. Take $\lambda>0$ and apply inequality
\eqref{eq:gemb} to $\lambda F$, subtract $1$ on both sides, and then divide out by $\lambda^2$ the resulting inequality. Then \eqref{devroye-ineq}
follows easily by Taylor expansion and letting $\lambda$ tend to $0$. (See \cite{CCR} for details.)

\section{Gibbs measures and equilibrium states}\label{sec:Gibbs}

\subsection{Potentials, specifications and relative entropy}

We refer to \cite{Geo} or \cite{FVbook} for details. We consider shift-invariant uniformly summable potentials.
More precisely, a potential is a family of functions $(\Phi(\Lambda,\cdot))_{\Lambda \Subset \Zd}$ such that,
for each (nonempty) $\Lambda\Subset\Zd$, the function $\Phi_\Lambda:\Omega\to\R$ is $\mathfrak{B}_\Lambda$-measurable.
Shift-invariance means that $\Phi(\Lambda+x,\theta_x\omega)=\Phi(\Lambda,\omega)$ for all $\Lambda\Subset\Zd$, $
\omega\in\Omega$ and $x\in\Zd$ (where $\Lambda+x=\{y+x: y\in\Lambda\}$).
Uniform summability is the property that
\[
\sum_{\substack{\Lambda\Subset\Zd\\ \Lambda \ni 0}}
\|\Phi(\Lambda,\cdot)\|_\infty <\infty.
\]
We shall denote by $\mathscr{B}_\Theta$ the space of uniformly summable shift-invariant potentials.

Given $\Phi\in \mathscr{B}_\Theta$ and $\Lambda\Subset\Zd$, the associated Hamiltonian in $\Lambda$ with
boundary condition $\eta\in\Omega$ is given by
\[
\mathcal{H}_{\Lambda}(\omega|\eta)= \sum_{\substack{\Lambda'\Subset\Zd\\ \Lambda'\cap\, \Lambda \,\neq\,\emptyset}} \Phi(\Lambda',\omega_{\Lambda}\eta_{\Lambda^{\!c}})\,.
\]
The corresponding Gibbsian specification is then defined as
\[
\bgamma^{\Phi}_{\Lambda} (\omega|\eta)
=\frac{\exp\left(-\mathcal{H}_{\Lambda}(\omega|\eta)\right)}{Z_{\Lambda}(\eta)}
\]
where $Z_{\Lambda}(\eta)$ is the partition function in $\Lambda$ (normalizing factor).
We say that $\mu$ is a Gibbs measure for the potential $\Phi$ if, for every  $\Lambda \Subset \Zd$,
$\bgamma^{\Phi}_{\Lambda} (\omega|\cdot)$ is a version of the conditional probability
$\mu(\omega_{\Lambda}| \mathfrak{B}\!_{\Lambda^c})$.
Equivalently, this means that for all $A\in \mathfrak{B}$, $\Lambda \Subset \Zd$, one has the so-called ``DLR equations''
\[
\mu(A)=\int  \sum_{\omega'\in S^\Lambda} \bgamma^{\Phi}_{\Lambda} (\omega'|\eta) \, \un\!_{A}(\omega'_{\Lambda}\eta_{\Lambda^c})
\dd\mu(\eta).
\]
The set of Gibbs measures for a given potential $\Phi$, denoted by $\EuScript{G}(\Phi)$, is never empty, but it may be not reduced to a
singleton. It is a simplex.
The set $\EuScript{G}_\Theta(\Phi):=\EuScript{G}(\Phi)\cap \EuScript{M}_\Theta(\Omega)$, that is, the set of shift-invariant Gibbs measures for
$\Phi$, is never empty.
It is a simplex whose set of extreme points, denoted by
$\mathrm{ex}\, \EuScript{G}_\Theta(\Phi)$, coincides with the set of ergodic Gibbs measures for $\Phi$, that is,
$\mathrm{ex}\, \EuScript{G}_\Theta(\Phi)=\EuScript{G}_\Theta(\Phi)\cap\mathrm{ex}\, \EuScript{M}_\Theta(\Omega)$.
Of course, when $\EuScript{G}(\Phi)$ is a singleton, then the unique Gibbs measure is shift-invariant and ergodic.

We now define relative entropy which plays a central role in this paper.
Let $\mu,\nu\in\EuScript{M}_\Theta(\Omega)$. For each $n\in\N$, we denote by $\nu_n$ (resp. $\mu_n$) the probability
measure induced by $\nu$ (resp. $\mu$) on $S^{ \Lambda_n}$ by projection.
Then the relative entropy of $\nu_n$ with respect to $\mu_n$ is defined by
\[
H_n(\nu|\mu)=\sum_{p_n\in S^{ \Lambda_n}} \nu([\,p_n]) \log \frac{\nu([\,p_n])}{\mu([\,p_n])}.
\]
We denote by $\log$ the natural logarithm.
\begin{definition}[Relative entropy]
\leavevmode\\
Let $\mu,\nu\in\EuScript{M}_\Theta(\Omega)$. The lower and upper relative entropies of $\nu$ with respect to $\mu$ are
\begin{equation}\label{def-hdown-hup}
h_*(\nu|\mu)=\liminf_{n\to+\infty} \frac{H_n(\nu|\mu)}{(2n+1)^d}
\quad\textup{and}\quad
h^*(\nu|\mu)=\limsup_{n\to+\infty} \frac{H_n(\nu|\mu)}{(2n+1)^d}.
\end{equation}
When the limit exists, we put $h(\nu|\mu)=h_*(\nu|\mu)=h^*(\nu|\mu)$.
\end{definition}

It is well-known that $h_*(\nu|\mu)$ and $h^*(\nu|\mu)$ are nonnegative numbers.
Let $\Phi\in \mathscr{B}_\Theta$ and $\mu\in \EuScript{G}_\Theta(\Phi)$. It is proved in \cite[Chapter 15]{Geo}
that for any $\nu\in \EuScript{M}_\Theta(\Omega)$ we have
\begin{equation}\label{formula-RE}
h_*(\nu|\mu)=h^*(\nu|\mu)
=h(\nu|\mu)=P(\Phi)+\sum_{\Lambda\ni \,0}|\Lambda|^{-1}\E_{\nu}[\Phi(\Lambda,\cdot)]-h(\nu)
\end{equation}
where $P(\Phi)$ is the pressure of $\Phi$ and $h(\nu)$ is the entropy of $\nu$. (We do not need to give the precise definitions of these two
quantities since they are not used explicitly in the sequel.)
Notice that, given $\nu\in \EuScript{M}_\Theta(\Omega)$, $h(\nu|\mu)$ is the same number for all $\mu\in\EuScript{G}_\Theta(\Phi)$, so it is natural to define, for each
$\nu\in \EuScript{M}_\Theta(\Omega)$,
\[
h(\nu|\Phi):=h(\nu|\mu)\quad \text{where } \mu\text{  is any element in} \;\EuScript{G}_\Theta(\Phi).
\]
We recall the definition of equilibrium states.
\begin{definition}[Equilibrium states]
\leavevmode\\
Let $\Phi\in \mathscr{B}_\Theta$.
A shift-invariant probability measure $\nu$ such that $h(\nu|\Phi)=0$ is called an equilibrium state for $\Phi$.
\end{definition}
We have the following fundamental result which is usually referred to as the variational principle for equilibrium states.
\begin{theorem}[{\cite[Chapter 15]{Geo}}, {\cite[Theorem 4.2]{ruelle2004}}]
\label{thm:vp}
\leavevmode\\
Let $\Phi\in \mathscr{B}_\Theta$.
We have $h(\nu|\Phi)=0$ if and only if $\nu\in \EuScript{G}_\Theta(\Phi)$.
In particular,  $\EuScript{G}_\Theta(\Phi)$ coincides with the set of equilibrium states for $\Phi$.
\end{theorem}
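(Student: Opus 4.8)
The plan is to deduce everything from the relative-entropy formula \eqref{formula-RE}, which I assume as given, and to isolate the one substantial implication. Writing $p(\nu):=h(\nu)-\sum_{\Lambda\ni 0}|\Lambda|^{-1}\E_{\nu}[\Phi(\Lambda,\cdot)]$ for the free-energy density, \eqref{formula-RE} reads $h(\nu|\Phi)=P(\Phi)-p(\nu)$ for every $\nu\in\EuScript{M}_\Theta(\Omega)$. Since relative entropy is nonnegative, this forces $p(\nu)\le P(\Phi)$ for all $\nu$, and the condition $h(\nu|\Phi)=0$ is \emph{equivalent} to $\nu$ maximising the free-energy density, $p(\nu)=P(\Phi)$. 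Thus the theorem amounts to the statement that the maximisers of $p$ over $\EuScript{M}_\Theta(\Omega)$ are precisely the shift-invariant Gibbs measures for $\Phi$.

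One implication is immediate. If $\nu\in\EuScript{G}_\Theta(\Phi)$, then $\nu$ is itself an admissible reference measure in \eqref{formula-RE}, so I may take $\mu=\nu$; since $H_n(\nu|\nu)=0$ for every $n$, I obtain $h(\nu|\Phi)=h(\nu|\nu)=0$. In particular every shift-invariant Gibbs measure attains the supremum, so $P(\Phi)=\sup_{\nu}p(\nu)$ with the supremum attained.

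The reverse implication $h(\nu|\Phi)=0\Rightarrow\nu\in\EuScript{G}_\Theta(\Phi)$ is the Gibbs variational principle (Lanford--Ruelle theorem) and is where all the work lies. I would argue by contraposition. If $\nu$ is not Gibbs, there is a finite box $\Delta$ on which the DLR equation fails, i.e. $\nu(\cdot\,|\,\mathfrak{B}_{\Delta^c})$ differs from $\bgamma^{\Phi}_{\Delta}(\cdot\,|\,\cdot)$ on a set of positive $\nu$-measure. I form the re-Gibbsed measure $\tilde\nu$ with the same marginal as $\nu$ on $\Delta^c$ but conditional law $\bgamma^{\Phi}_{\Delta}$ inside $\Delta$. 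The finite-volume Gibbs inequality says that, for a prescribed boundary marginal, the Gibbsian conditional uniquely maximises the local free energy; hence re-Gibbsing strictly increases the free energy by an amount bounded below by the (strictly positive) conditional relative entropy of $\nu(\cdot\,|\,\mathfrak{B}_{\Delta^c})$ against $\bgamma^{\Phi}_{\Delta}$. Using shift-invariance of $\nu$ and $\Phi$, I then tile a large cube $\Lambda_n$ with order $|\Lambda_n|$ many well-separated translated copies of $\Delta$, each contributing the same strictly positive gain, while uniform summability of $\Phi$ keeps the interaction between copies and the boundary corrections $o(|\Lambda_n|)$. Dividing by $|\Lambda_n|$ yields $p(\nu)<P(\Phi)$, that is $h(\nu|\Phi)>0$, the desired contradiction.

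I expect the main obstacle to be precisely the passage from a \emph{single} local free-energy gain to a strictly positive gain in \emph{density}: the per-box inequality is elementary, but converting it into a macroscopic bound requires a careful tiling argument that separates the copies of $\Delta$ enough for their energetic coupling to be negligible while still packing order $|\Lambda_n|$ of them, together with the entropy bookkeeping when the conditionals are modified in many boxes at once. A cleaner alternative I would also consider works directly from $h(\nu|\mu)=0$ for a fixed $\mu\in\EuScript{G}_\Theta(\Phi)$: telescoping $H_{\Lambda_n}(\nu|\mu)$ and using that its density vanishes forces the Radon--Nikodym derivatives of the finite-volume conditionals of $\nu$ against those of $\mu$ to converge to $1$; since $\mu$ is Gibbs, its conditionals are the $\bgamma^{\Phi}_{\Lambda}$, so $\nu$ inherits the DLR equations. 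The delicate step there is justifying the interchange of limits and the $L^1$-convergence of the conditional densities.
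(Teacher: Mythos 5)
The paper offers no proof of Theorem \ref{thm:vp}: it is imported wholesale from \cite[Chapter 15]{Geo} and \cite[Theorem 4.2]{ruelle2004}, just as \eqref{formula-RE} (which you assume) is. So your proposal has to be judged against the classical proofs it implicitly tries to reconstruct. Your reduction of the statement to ``maximisers of the free-energy density are exactly the shift-invariant Gibbs measures'' is correct, and so is your proof of the direction $\nu\in\EuScript{G}_\Theta(\Phi)\Rightarrow h(\nu|\Phi)=0$: since \eqref{formula-RE} shows that $h(\nu|\mu)$ has the same value for every reference $\mu\in\EuScript{G}_\Theta(\Phi)$, taking $\mu=\nu$ gives zero at once.

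The gap is in the converse, the Lanford--Ruelle implication, which is the entire substance of the theorem, and your sketch does not close it. Two concrete problems. First, your re-Gibbsed measure $\tilde\nu$ is not shift-invariant, so ``its free-energy density'' is not defined in the variational framework; to compare anything with $P(\Phi)$ you must either average $\tilde\nu$ over translates (which dilutes the single-box gain and re-introduces exactly the bookkeeping you postponed) or work with finite-volume free energies and control boundary terms, and neither is routine. Second, and more seriously, in any tiling or chain-rule decomposition of $H_n(\nu|\mu)$ over disjoint translates $\Delta_i\subset\Lambda_n$, the per-box terms are relative entropies between conditional laws given $\mathfrak{B}_{\Lambda_n\setminus(\Delta_i\cup\cdots)}$, i.e.\ given strictly less than the full exterior $\mathfrak{B}_{\Delta_i^c}$; since $\mu$ is only known to have $\bgamma^{\Phi}_{\Delta_i}$ as its conditional law given the \emph{full} exterior, the single-box defect $c=\E_\nu\big[H\big(\nu(\cdot\,|\,\mathfrak{B}_{\Delta^c})\,\big\|\,\bgamma^{\Phi}_{\Delta}(\cdot\,|\,\cdot)\big)\big]>0$ does not automatically survive as a lower bound for each term. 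Restoring it requires a martingale-convergence and lower-semicontinuity argument, with the box size and separation chosen accordingly; that is precisely where the proof in \cite[Chapter 15]{Geo} does its real work, and the same interchange-of-limits issue blocks your ``cleaner alternative'' via $L^1$-convergence of conditional densities. Your closing paragraph concedes that both routes stall exactly at these points; identifying the obstacle is not the same as overcoming it, so as it stands the proposal establishes only the easy half of the equivalence.
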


\subsection{Examples}

\subsubsection{Dobrushin's uniqueness}

We first recall that a sufficient condition for GCB to hold is Dobrushin's uniqueness condition which guarantees that $\Phi\in\mathscr{B}_\Theta$
admits a unique Gibbs measure. Under this condition, this Gibbs measure satisfies a Gaussian concentration bound with a constant equal to
$(2(1-\mathfrak{c}(\bgamma^{\Phi}))^2)^{-1}$, where $\mathfrak{c}(\bgamma^{\Phi})<1$. This holds for instance for any finite-range potential $
\beta\Phi$ provided that $\beta>0$ is small enough. A basic example is the 
nearest-neighbor ferromagnetic Ising model. Note that if one adds a external uniform magnetic field $h$ in this model, then Dobrushin's uniqueness condition can hold for every $\beta>0$ if $|h|$ is large enough, hence we can have GCB at low temperature.
We refer to \cite[Chapter 8]{Geo} for more details and examples.

\subsubsection{Ising model}

The nearest-neighbor ferromagnetic Ising model on $\Z^d$ with zero external magnetic field is a basic example illustrating the above definitions
and results. Take $S=\{-1,+1\}$, $d\geq 2$, and
\[
\Phi(\Lambda,\omega)=
\begin{cases}
- \,\omega_x\,\omega_y & \textup{if}\quad \Lambda=\{x,y\}\;\textup{and}\;\|x-y\|_1=1\\
0 & \textup{otherwise}.
\end{cases}
\]
Denote by $\mu_{\beta\Phi}^+$ (resp. $\mu_{\beta\Phi}^-$) is the Gibbs measure obtained with the ``$+$'' boundary condition (resp. the ``$-$''
boundary condition).
It is well-known that there exists $\beta_c=\beta_c(d)$ such that, for all $\beta>\beta_c$, $\mu_{\beta\Phi}^+$ and $\mu_{\beta\Phi}^-$ are
distinct ergodic equilibrium states. According to our main theorem, they cannot satisfy GCB.

For $d=2$, there is an inverse temperature $\beta_c$ such that, for all $\beta\leq \beta_c$, $\EuScript{G}_\Theta(\beta\Phi)$ is a singleton, and, for
all $\beta>\beta_c$, $\EuScript{G}_\Theta(\beta\Phi)=\{\lambda\mu_{\beta\Phi}^+ + (1-\lambda)\mu_{\beta\Phi}^-:\lambda\in\left[0,1\right]\}$
(hence $\mathrm{ex}\, \EuScript{G}_\Theta(\beta\Phi)=\{\mu_{\beta\Phi}^+,\mu_{\beta\Phi}^-\}$).
It is known (see \cite[p. 172]
{ellis}) that for $\beta=\beta_c=\log(1+\sqrt{2})/2$, there is a unique Gibbs measure $\mu_{\beta_c}$ such that
\begin{equation}\label{variance-critical-Ising}
\lim_{n\to\infty}\frac{1}{|\Lambda_n|} \mathrm{Var}_{\gmu_{\beta_c}}\!\!\left(\,\sum_{x\in\Lambda_n} \omega_x \right)=+\infty.
\end{equation}
Therefore, $\mu_{\beta_c}$ cannot satisfy a Gaussian concentration bound because, by \eqref{devroye-ineq}, one
would have
\begin{equation}\label{pouac}
\limsup_{n\to\infty}\frac{1}{|\Lambda_n|}\mathrm{Var}_{\gmu_{\beta_c}}\!\!\left(\,\sum_{x\in\Lambda_n} \omega_x \right) \leq 8D
\end{equation}
which contradicts \eqref{variance-critical-Ising}. To obtain \eqref{pouac}, apply \eqref{devroye-ineq} to
$F(\omega)=\sum_{x\in\Lambda_n} \pi_{\{0\}}(\theta_x\, \omega)$, where $\pi_{\{0\}}(\omega)=\omega_0$.
Then use Lemma \ref{lem:deltasums} to get
\[
\mathrm{Var}_{\gmu_{\beta_c}}\!\!\left(\,\sum_{x\in\Lambda_n} \omega_x \right) \leq 8D |\Lambda_n|
\]
for all $n$. Therefore, the Gibbs measure for the 2D Ising model at critical temperature does not satisfy a Gaussian concentration bound. It does
not even satisfy the variance inequality \eqref{devroye-ineq}.
\begin{remark}
Let us mention that GCB holds for all $\beta<\beta_c$ in the 2D Ising model. This follows by combining several results
taken from \cite{MOS,SZ1,SZ2}. They involve log-Sobolev inequality, `complete analyticity' in the sense of Dobrushin and Shlosman, and a 
particular phenomenon arising only in  dimension  two for finite-range potentials. We refer to \cite{moles} for details.  This  example suggests 
that GCB is equivalent to complete analyticity in any dimension  and for any finite-range potential, but we are not able to prove this at present.
\end{remark}

For $d=3$, the situation is more complicated at low temperatures.
Indeed, in addition to $\mu_{\beta\Phi}^+$ and $\mu_{\beta\Phi}^-$, $\EuScript{G}(\beta\Phi)$ also contains, for $\beta$ large enough, a family
of Gibbs measures which are not shift-invariant, the so-called Dobrushin states. In other words,
$\EuScript{G}(\beta\Phi)\backslash \EuScript{G}_\Theta(\beta\Phi)\neq \emptyset$.
In the present paper, we do not deal with these non-shift invariant Gibbs measures for the Ising model (and other models).
Indeed, whereas for translation invariant Gibbs states we obtain here a general uniqueness result, the situation becomes much more intricate for non-translation Gibbs states, and even more for non-translation invariant potentials.
In fact, Dobrushin interface states can be shown to be incompatible with GCB, using the volume large deviation bound \eqref{voldev}.
However, other more subtle scenarios of non-uniqueness combined with a unique translation invariant Gibbs measure can occur, such
as in \cite{Borgs}, and with the techniques developed in this paper, we cannot show that GCB excludes such scenarios of non-uniqueness.

\subsubsection{Dyson model}

Consider the Dyson model: $S=\{-1,+1\}$, $d=1$, and
\[
\Phi(\Lambda,\omega)=
\begin{cases}
- \frac{\omega_x\,\omega_y}{|x-y|^{\alpha}} & \textup{if}\quad \Lambda=\{x,y\}\;\textup{such that}\;x\neq y\\
0 & \textup{otherwise}
\end{cases}
\]
with $\alpha>1$. As in the Ising model, consider the Gibbs measures $\mu_{\beta\Phi}^+$ and  $\mu_{\beta\Phi}^-$ obtained as the infinite-volume
limits of the corresponding specification with the ``$+$'' and the ``$-$'' boundary conditions, respectively. Let $1<\alpha\leq 2$. There exists $
\beta_c>0$ such that, for all $\beta<\beta_c$,
$\mu_{\beta\Phi}^+\neq \mu_{\beta\Phi}^-$, and $\mathrm{ex}\, \EuScript{G}_\Theta(\beta\Phi)=\{\mu_{\beta\Phi}^+,\mu_{\beta\Phi}^-\}$.
We refer to \cite{le-ny-survey} for the relevant references.
By the above theorem, these two equilibrium states cannot satisfy a Gaussian concentration bound.

\subsubsection{Other examples}

For instance, consider the Potts model with $S=\{1,\ldots,N\}$ for sufficiently large $N$, for which
there exists $0<\beta_{{\scriptscriptstyle N}}<+\infty$ such that $\big|\mathrm{ex}\, \EuScript{G}_\Theta(\beta_{{\scriptscriptstyle N}} \Phi)|= N+1$,
one of these measures being symmetric under spin flip, and $\big|\mathrm{ex}\, \EuScript{G}_\Theta(\beta \Phi)|= N$ when $
\beta>\beta_{{\scriptscriptstyle N}}$.
Hence, there are $N+1$ ergodic equilibrium states in this model which, at inverse temperature
$\beta_{{\scriptscriptstyle N}}$, cannot satisfy a Gaussian concentration bound, and when $\beta>\beta_{{\scriptscriptstyle N}}$,
there are $N$ such ergodic equilibrium states. See \cite[Chapter 19]{Geo} for details and other examples.

\section{Proof of the main result}

\subsection{An abstract result}

The first way to prove the main theorem is to establish an abstract theorem which is of independent interest, and can be applied to
equilibrium states.
To state it, we need to define the ``positive relative entropy property''.

\begin{definition}[Positive relative entropy property]
\label{def:PREP}
\leavevmode\\
An ergodic probability measure $\mu$ on $\Omega$ is said to have the positive relative entropy property
if $h_*(\nu|\mu)>0$ for every ergodic probability measure $\nu\neq \mu$.
\end{definition}
The lower relative entropy $h_*(\nu|\mu)$ is defined in \eqref{def-hdown-hup}.
We can now state the following result.
(Recall that by Proposition \ref{GCBimpliesergodicity} we only need to consider \textit{ergodic} measures.)

\begin{theorem}[GCB implies positive relative entropy]
\label{abstracttheorem}
\leavevmode\\
Let $\mu$ be an ergodic probability measure on $\Omega$ which satisfies GCB. Then $\mu$ has the
positive relative entropy property.
\end{theorem}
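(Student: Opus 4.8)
The plan is to proceed by direct estimation rather than contradiction: given an ergodic $\nu\neq\mu$, I will exhibit a sequence of events that are typical under $\nu$ but exponentially rare under $\mu$, and then feed this asymmetry into the monotonicity (data-processing) inequality for relative entropy to force $h_*(\nu|\mu)$ to be bounded below by a positive constant. This route uses GCB only through the tail bound of Proposition \ref{prop-exp-dev}, and sidesteps an explicit appeal to the blowing-up property.

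Since $\mu\neq\nu$ and shift-invariant measures are determined by the expectations of local functions, I would first fix some $f\in\mathscr{L}$ with $\E_\nu(f)\neq\E_\mu(f)$; replacing $f$ by $-f$ if necessary, set $\varepsilon:=\E_\nu(f)-\E_\mu(f)>0$. Consider the ergodic averages
\[
A_n:=\frac{1}{|\Lambda_n|}\sum_{x\in\Lambda_n} f\circ\theta_x ,
\]
which are local, being $\mathfrak{B}_{\Lambda_{n+r}}$-measurable where $r$ is the radius of the dependence set of $f$. A direct computation of oscillations gives $\delta_y(A_n)\leq |\Lambda_n|^{-1}\sum_{z}\delta_z(f)$ for every site $y$, and only $O(|\Lambda_n|)$ sites $y$ contribute, so that $\|\ushort{\delta}(A_n)\|_2^2=O(1/|\Lambda_n|)$.

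Next I would play the two measures against $A_n$ in opposite ways. Under $\mu$, shift-invariance gives $\E_\mu(A_n)=\E_\mu(f)$, so Proposition \ref{prop-exp-dev} applied to $A_n$ yields, for the event $E_n:=\{A_n\geq \E_\mu(f)+\varepsilon/2\}$,
\[
\mu(E_n)\leq \exp\!\left(-\frac{(\varepsilon/2)^2}{4D\,\|\ushort{\delta}(A_n)\|_2^2}\right)\leq \e^{-c|\Lambda_n|}
\]
for some $c=c(\varepsilon,D,f)>0$, the last inequality using the oscillation estimate above. Under $\nu$, the multidimensional ergodic theorem (the cubes $\Lambda_n$ form a F\o lner sequence) gives $A_n\to\E_\nu(f)$ $\nu$-a.s., hence $\nu(E_n)=\nu\{A_n\geq \E_\nu(f)-\varepsilon/2\}\to 1$, so $\nu(E_n)\geq 1/2$ for all large $n$. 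Writing $N(n)=n+r$, so that $E_n$ is determined by the coordinates in $\Lambda_{N(n)}$, the monotonicity of relative entropy under the two-set coarse-graining $\{E_n,E_n^c\}$ gives
\[
H_{N(n)}(\nu|\mu)\geq \nu(E_n)\log\frac{\nu(E_n)}{\mu(E_n)}+\big(1-\nu(E_n)\big)\log\frac{1-\nu(E_n)}{1-\mu(E_n)} .
\]
The second summand is bounded below by $-1/\e$, while the first is at least $\tfrac12\big(c|\Lambda_n|-\log 2\big)$. Dividing by $(2N(n)+1)^d$ and letting $n\to\infty$ (so that $N(n)$ runs through all large integers and $|\Lambda_{N(n)}|/|\Lambda_n|\to 1$) yields $h_*(\nu|\mu)\geq c/2>0$, which is the positive relative entropy property.

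I expect the main, though modest, obstacle to be the bookkeeping in the oscillation estimate, namely confirming that averaging a local function over $\Lambda_n$ genuinely produces $\|\ushort{\delta}(A_n)\|_2^2=O(1/|\Lambda_n|)$ so that the GCB exponent scales \emph{linearly} in the volume, together with the harmless boundary adjustment $\Lambda_n\to\Lambda_{n+r}$. Conceptually the only substantial input beyond GCB is the data-processing inequality for relative entropy. If one instead wished to pass through the blowing-up property explicitly, one would apply Proposition \ref{prop-exp-dev} to the Hamming-distance functions $\omega\mapsto d_H(\omega_{\Lambda_n},C)$ (whose oscillations are at most $1$ at each of the $|\Lambda_n|$ relevant sites) and then argue along the Marton--Shields lines; this gives the same conclusion but through a more structural detour.
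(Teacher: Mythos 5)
Your proof is correct, and it takes a genuinely different route from the paper's. The paper proves Theorem \ref{abstracttheorem} through the Marton--Shields chain: GCB implies the blowing-up property (Gaussian concentration of the Hamming distance to a set of not-too-small measure), blowing-up implies the exponential rate of convergence for empirical frequencies, and that in turn implies positive lower relative entropy (via a Markov-inequality argument on log-density ratios, Lemma \ref{lem:boundrelent}). You bypass all of this intermediate structure: you use GCB only through the tail bound of Proposition \ref{prop-exp-dev} applied to the ergodic averages $A_n$ of a separating local function $f$ (your oscillation estimate $\|\ushort{\delta}(A_n)\|_2^2=O(1/|\Lambda_n|)$ is in substance the paper's Lemma \ref{lem:deltasums} divided by $|\Lambda_n|^2$, the boundary adjustment $\Lambda_n\to\Lambda_{n+r}$ being harmless), then you pit this against the ergodic theorem under $\nu$ and convert the asymmetry $\mu(E_n)\leq \e^{-c|\Lambda_n|}$, $\nu(E_n)\geq 1/2$ into a linear-in-volume lower bound on $H_{N(n)}(\nu|\mu)$ by the data-processing inequality for the two-set partition $\{E_n,E_n^c\}$. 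Your argument is closest in spirit to the paper's alternative proof via large deviations, which uses the same events and the same volume-order bound \eqref{voldev}; but that proof needs the large deviation \emph{lower} bound, which is available only for equilibrium states, so it proves only the Main Theorem and not the abstract Theorem \ref{abstracttheorem}. By substituting data processing plus the ergodic theorem for $\nu$ (which needs nothing beyond ergodicity of $\nu$) in place of the LDP, you genuinely establish the abstract statement for an arbitrary ergodic $\mu$ satisfying GCB. What your route gives up are the intermediate results of independent interest (blowing-up, exponential rate for frequencies, and their connection to the finitary-coding literature); what it buys is brevity, self-containedness, and an explicit quantitative bound of the form $h_*(\nu|\mu)\geq \varepsilon^2/(32D\|\ushort{\delta}(f)\|_1^2)$ up to boundary factors, where $\varepsilon=\E_\nu(f)-\E_\mu(f)$. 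Two pedantic points, neither a gap: if $\mu(E_n)=0$ the binary relative entropy is $+\infty$ and your inequality holds trivially; and the shift-invariance of $\mu$ that you use to write $\E_\mu(A_n)=\E_\mu(f)$ is part of the hypothesis that $\mu$ is ergodic in the paper's sense.
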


\begin{sketchofproof}
We outline the proof which consists in the following three steps.
We first prove that GCB implies the so-called ``blowing-up'' property (Section \ref{GCBimpliesBup}).
Then we prove that the blowing-up property implies the ``exponential rate of convergence for frequencies'' (Section \ref{BupimpliesERCF}).
Finally we prove that the latter implies the positive relative entropy property (Section \ref{ERCFimpliesPDP}).
\end{sketchofproof}

Now the main theorem stated in the introduction is a corollary of the previous theorem.
\begin{corollary}
\leavevmode\\
Let $\Phi\in \mathscr{B}_\Theta$. If $|\mathrm{ex}\, \EuScript{G}_\Theta(\Phi)|\geq 2$
and $\mu \in \mathrm{ex}\, \EuScript{G}_\Theta(\Phi)$, then $\mu$ cannot satisfy a Gaussian concentration bound.
\end{corollary}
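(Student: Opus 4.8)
The plan is to argue by contradiction, exploiting the fact that the extreme points of $\EuScript{G}_\Theta(\Phi)$ are precisely the ergodic equilibrium states, so that the hypotheses of Theorem \ref{abstracttheorem} are automatically in force, and combining this with the variational characterization of relative entropy. First I would record that, since $\mathrm{ex}\, \EuScript{G}_\Theta(\Phi)=\EuScript{G}_\Theta(\Phi)\cap\mathrm{ex}\, \EuScript{M}_\Theta(\Omega)$, every element of $\mathrm{ex}\, \EuScript{G}_\Theta(\Phi)$ is ergodic; in particular the given $\mu$ is ergodic. Suppose, for contradiction, that $\mu$ satisfies a Gaussian concentration bound. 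Because $|\mathrm{ex}\, \EuScript{G}_\Theta(\Phi)|\geq 2$, I may choose another extreme point $\nu\in\mathrm{ex}\, \EuScript{G}_\Theta(\Phi)$ with $\nu\neq\mu$, and this $\nu$ is again ergodic.

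Next I would feed $\mu$ into Theorem \ref{abstracttheorem}: as $\mu$ is ergodic and satisfies GCB, it enjoys the positive relative entropy property, so $h_*(\nu|\mu)>0$ for the ergodic measure $\nu\neq\mu$. On the other hand, both $\mu$ and $\nu$ lie in $\EuScript{G}_\Theta(\Phi)$, hence are equilibrium states for $\Phi$, so $h(\nu|\Phi)=0$ by Theorem \ref{thm:vp} (equivalently, by the definition of equilibrium states). By formula \eqref{formula-RE} the relative entropy of $\nu$ with respect to any Gibbs measure for $\Phi$ exists as a genuine limit and equals $h(\nu|\Phi)$; taking that Gibbs measure to be $\mu$ gives $h_*(\nu|\mu)=h(\nu|\mu)=h(\nu|\Phi)=0$. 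This flatly contradicts $h_*(\nu|\mu)>0$, so the assumption that $\mu$ satisfies a Gaussian concentration bound is untenable.

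Since all of the substantive work is concentrated in Theorem \ref{abstracttheorem} (which itself proceeds through the blowing-up property), there is essentially no obstacle at the level of this corollary. The only point that genuinely requires care is matching hypotheses: Theorem \ref{abstracttheorem} and the positive relative entropy property are stated for \emph{ergodic} measures, and the vanishing of $h_*(\nu|\mu)$ relies on $\nu$ being a shift-invariant equilibrium state. Both are guaranteed precisely because the statement is phrased for extreme points of $\EuScript{G}_\Theta(\Phi)$, which is why I would emphasize the identification $\mathrm{ex}\, \EuScript{G}_\Theta(\Phi)=\EuScript{G}_\Theta(\Phi)\cap\mathrm{ex}\, \EuScript{M}_\Theta(\Omega)$ at the outset rather than appealing to Proposition \ref{GCBimpliesergodicity}.
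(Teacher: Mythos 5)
Your proof is correct and follows essentially the same route as the paper's: invoke Theorem \ref{thm:vp} to get $h_*(\nu|\mu)=h(\nu|\Phi)=0$ for a second extreme point $\nu\neq\mu$, and conclude via Theorem \ref{abstracttheorem} that $\mu$ cannot satisfy GCB. The only difference is presentational (you phrase it as a contradiction and spell out the ergodicity of extreme points and the identification $h_*(\nu|\mu)=h(\nu|\mu)$ via \eqref{formula-RE}, which the paper leaves implicit).
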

\begin{proof}
Fix an arbitrary $\mu\in\mathrm{ex}\, \EuScript{G}_\Theta(\Phi)$.
By assumption there exists $\mu'\in \mathrm{ex}\, \EuScript{G}_\Theta(\Phi)$ such that $\mu\neq \mu'$.
But, by Theorem \ref{thm:vp}, we have $h(\mu'|\Phi)=h_*(\mu'|\mu)=0$, whence $\mu$ does not have
the positive relative entropy property. Therefore, according to Theorem \ref{abstracttheorem}, $\mu$
cannot satisfy GCB. This proves that none of the elements of $\mathrm{ex}\, \EuScript{G}_\Theta(\Phi)$ can satisfy GCB.
\end{proof}

\subsection{GCB implies blowing-up}\label{GCBimpliesBup}

Let $\Lambda\Subset\Zd$ (finite subset of $\Zd$). We define the (non-normalized)
Hamming distance between two configurations $\omega$ and $\eta$ in $S^\Lambda$ by
\[
\db_\Lambda(\omega,\eta)=\sum_{x\in \Lambda} \un_{\{\omega_x\neq\eta_x\}}\in\{0,1,\ldots,|\Lambda|\}.
\]
So we count at how many sites the configurations $\omega$ and $\eta$ in $S^\Lambda$ differ.
Clearly, we can see $\db_\Lambda$ as a local function on $\Omega\times\Omega$.
Given a subset $C\subseteq S^\Lambda$ define
\[
\db_\Lambda(\omega,C)=\inf_{\omega'\in C}\db_\Lambda(\omega,\omega').
\]
Given $\varepsilon\in [0,1]$, define the $\varepsilon$-neighborhood (or $\varepsilon$-blow-up) of $C$ as
\[
\langle C\rangle_\varepsilon=\big\{\omega\in S^\Lambda :
\db_\Lambda(\omega,C)< \varepsilon|\Lambda|\big\}\subseteq S^\Lambda.
\]
Recall that if $C\subseteq S^\Lambda$, $[C]=\{\omega\in\Omega: \pi_\Lambda(\omega)\in C\}$
where $\pi_\Lambda$ is the projection from $\Omega$ onto $S^\Lambda$.
We now define the blowing-up property.
\begin{definition}[Blowing-up property]\label{def:Bup}
\leavevmode\\
An ergodic probability measure $\mu$ on $(\Omega,\mathfrak{B})$ has the blowing-up property if given $
\varepsilon>0$ there is a $\delta>0$ and an $N$ such that if $n\geq N$ and $C\subseteq S^{ \Lambda_n}$ then
\[
\mu([C])\geq \e^{-(2n+1)^d\delta}\quad\textup{implies}\quad
\mu(\langle C\rangle_\varepsilon)\geq 1-\varepsilon.
\]
\end{definition}
Obviously, the blowing-up property can be formulated in terms of finite subsets of $\Z^d$ of arbitrary shape,
instead of cubes, but we will not need this generalization. The blowing-up property roughly says that any collection of configurations on a large finite box which has a
total measure which is not too exponentially small is such that most configurations are close to this collection in the Hamming distance.
We have the following result.
\begin{proposition}
\leavevmode\\
Let $\Lambda\Subset \Zd$ and $C\subseteq S^\Lambda$.
Suppose that $\mu$ is a probability measure which satisfies $\gcb{D}$
and such that $\mu([C])>0$. Then, we have
\begin{equation}\label{eq:concen-H}
\mu\big(\langle C\rangle_\varepsilon\big)\geq
1-\exp\left[-\frac{|\Lambda|}{4D}\left(\varepsilon-\frac{2\sqrt{D \log(\mu([C])^{-1})}}{\sqrt{|\Lambda|}}\,\right)^2\, \right]
\end{equation}
whenever $\varepsilon>2\sqrt{D\log(\mu([C])^{-1})/|\Lambda|}$. In particular, $\mu$ satisfies the blowing-up
property.
\end{proposition}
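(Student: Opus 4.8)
The plan is to reduce the whole statement to a single well-chosen local function together with two applications of the tail bound \eqref{eq:expcon1}. First I would introduce
\[
F(\omega) = \db_\Lambda(\omega,C) = \inf_{\omega'\in C} \db_\Lambda(\omega,\omega'),
\]
viewed as a function on $\Omega$ depending only on $\omega_\Lambda$, so that $F\in\mathscr{L}$ with dependence set contained in $\Lambda$. Two elementary facts make this the right object. Since changing a configuration at a single site $x\in\Lambda$ alters $\db_\Lambda(\cdot,\omega')$ by at most one for every fixed $\omega'$, and hence alters the infimum over $\omega'$ by at most one, we have $\delta_x(F)\leq 1$ for $x\in\Lambda$ and $\delta_x(F)=0$ otherwise; therefore $\|\ushort{\delta}(F)\|_2^2\leq|\Lambda|$. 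Moreover $F$ vanishes identically on $[C]$, so $[C]\subseteq\{F\leq 0\}$, while the complement of the blow-up is exactly $\{F\geq\varepsilon|\Lambda|\}$.

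The key step, and the one I expect to be the crux, is to bound the \emph{mean} $\E_\mu(F)$ in terms of $\mu([C])$. For this I would apply \eqref{eq:expcon1} to $-F$ (valid verbatim, as noted right after Proposition \ref{prop-exp-dev}): taking the deviation level $u=\E_\mu(F)$ and using $[C]\subseteq\{F\leq 0\}=\{-F-\E_\mu(-F)\geq\E_\mu(F)\}$ gives
\[
\mu([C])\leq \exp\left(-\frac{\E_\mu(F)^2}{4D\,\|\ushort{\delta}(F)\|_2^2}\right)\leq \exp\left(-\frac{\E_\mu(F)^2}{4D\,|\Lambda|}\right).
\]
Rearranging yields the a priori bound $\E_\mu(F)\leq 2\sqrt{D\,|\Lambda|\,\log(\mu([C])^{-1})}$ (trivially true in the degenerate case $\E_\mu(F)=0$). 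This is the only place where the hypothesis $\mu([C])>0$ enters, and it is what converts the otherwise opaque expectation into an explicit function of $\mu([C])$; the idea is exactly the Marton-type use of the lower tail.

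With $\E_\mu(F)$ controlled, the conclusion follows from a second, direct application of \eqref{eq:expcon1} to $F$. Under the hypothesis $\varepsilon>2\sqrt{D\log(\mu([C])^{-1})/|\Lambda|}$ the level $u=\varepsilon|\Lambda|-\E_\mu(F)$ is strictly positive, and substituting the bound on $\E_\mu(F)$ together with $\|\ushort{\delta}(F)\|_2^2\leq|\Lambda|$ gives
\[
\mu\{F\geq\varepsilon|\Lambda|\}\leq \exp\left(-\frac{(\varepsilon|\Lambda|-\E_\mu(F))^2}{4D\,|\Lambda|}\right)\leq \exp\left[-\frac{|\Lambda|}{4D}\left(\varepsilon-\frac{2\sqrt{D\log(\mu([C])^{-1})}}{\sqrt{|\Lambda|}}\right)^2\right],
\]
which is \eqref{eq:concen-H} after passing to complements. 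Finally, for the blowing-up property I would, given $\varepsilon>0$, fix any $\delta<\varepsilon^2/(16D)$ and $N=N(\varepsilon,D)$; then $\mu([C])\geq\e^{-(2n+1)^d\delta}$ forces $2\sqrt{D\log(\mu([C])^{-1})/|\Lambda_n|}\leq 2\sqrt{D\delta}<\varepsilon/2$ (so in particular the hypothesis of \eqref{eq:concen-H} holds), whence \eqref{eq:concen-H} yields $\mu(\langle C\rangle_\varepsilon)\geq 1-\exp(-(2n+1)^d\varepsilon^2/(16D))$, and the right-hand side exceeds $1-\varepsilon$ once $n\geq N$.
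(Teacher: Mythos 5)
Your proposal is correct and matches the paper's proof in all essentials: the same function $F=\db_\Lambda(\cdot,C)$ with $\delta_x(F)\leq 1$, the same mean bound $\E_\mu(F)\leq 2\sqrt{D|\Lambda|\log(\mu([C])^{-1})}$, and the same final application of the upper-tail inequality; the only difference is that you derive the mean bound by applying \eqref{eq:expcon1} to $-F$ at level $u=\E_\mu(F)$ and using $[C]\subseteq\{F\leq 0\}$, whereas the paper applies \eqref{eq:gemb} to $-\lambda F$, lower bounds $\E_\mu\big[\e^{-\lambda F}\big]$ by $\mu([C])$, and optimizes over $\lambda$ --- two packagings of the same Chernoff computation yielding the identical constant. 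Your choice $\delta<\varepsilon^2/(16D)$ in the blowing-up step is likewise a harmless variant of the paper's $\delta=\varepsilon^2/(9D)$.
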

Note that we do not require $\mu$ to be shift-invariant. As already mentioned, if $\mu$ is taken shift-invariant then it must be ergodic, this is enforced by the Gaussian concentration bound.

\begin{proof}
Consider the local function $F(\omega)=\db_\Lambda(\omega,C)$. One easily checks that
$\delta_x(F)\leq 1$ for all $x\in \Lambda$. Applying \eqref{eq:expcon1}  gives
\begin{equation}\label{puic}
\mu\left\{\omega\in \Omega : F(\omega) \geq u+\E_\mu (F)\right\} \leq
\exp\left(-\frac{u^2}{4D|\Lambda|}\right)
\end{equation}
for all $u>0$. We now bound $\E_\mu (F)$ from above. Applying \eqref{eq:gemb} to $-\lambda F$, for some $\lambda>0$ to be
chosen later on, we get
\[
\exp\left(\lambda\E_\mu(F)\right) \E_\mu\big[\exp\left(-\lambda F\right) \big]
\leq \exp\left(D\lambda^2 |\Lambda|\right).
\]
Observe that by definition of $F$ we have
\[
\E_\mu\big[\exp\left(-\lambda F\right) \big]\geq
\E_\mu\big[\un_{C}\exp\left(-\lambda F\right) \big]=\mu([C]).
\]
Combining the two previous inequalities and taking the logarithm gives
\[
\E_\mu(F)\leq \inf_{\lambda>0} \Big\{D \lambda|\Lambda| +
\frac{1}{\lambda}\ln \big(\mu([C])^{-1}\big)\Big\}
\]
which gives the following estimate by taking the value of $\lambda$ minimizing the right-hand side
\[
\E_\mu(F)\leq 2\sqrt{D|\Lambda| \ln\big(\mu([C])^{-1}\big)}=:v_0.
\]
Therefore inequality \eqref{puic} implies that
\[
\mu\left\{\omega\in \Omega : F(\omega) \geq v\right\} \leq
\exp\left(-\frac{(v-v_0)^2}{4D|\Lambda|}\right)
\]
for all $v>v_0$. To finish the proof of \eqref{eq:concen-H}, take $v=\varepsilon|\Lambda|$ and observe that, by definition
of $F$, $\mu\left\{\omega\in \Omega : F(\omega) \geq v\right\}=\mu\big(\Omega\backslash \langle C\rangle_\varepsilon\big)$.\newline
We now prove that $\mu$ satisfies the blowing-up property.
Fix $\varepsilon>0$ arbitrarily, and take $C$ such that $\mu([C])\geq \e^{-(2n+1)^d \delta}$ for
some $\delta>0$ to be chosen later on, subject to the condition $\varepsilon>2\sqrt{D\delta}$.
We now apply \eqref{eq:concen-H} with $\Lambda= \Lambda_n$ to get
\[
\mu(\langle C\rangle_\varepsilon)\geq
1-\exp\left(-\frac{(2n+1)^d}{4D} \Big(\varepsilon-2\sqrt{D\delta}\,\Big)^2\right).
\]
Taking $\delta=\varepsilon^2/(9D)$ yields
\[
\mu(\langle C\rangle_\varepsilon)\geq 1-\varepsilon
\]
for all $n\geq N:=\lfloor (36D\varepsilon^{-2}\log \varepsilon^{-1})^{1/d}\rfloor/2$.
\end{proof}

\subsection{Blowing-up implies exponential rate for frequencies}\label{BupimpliesERCF}

Given $\omega\in \Omega$, $n>k\geq 0$, and a pattern $p_k\in S^{ \Lambda_k}$, let
\begin{equation}\label{def:empfreq}
\mathfrak{f}_{n,k}(\omega;p_k)=\frac{|\{x\in  \Lambda_{n-k} : (\theta_x\, \omega)_{ \Lambda_k}=p_k\}|}{(2(n-k)+1)^d}\,.
\end{equation}
In words, this is simply the frequency of occurrence of the pattern $p_k$ if we look at the configuration $\omega$ restricted to
the cube $ \Lambda_n$. Let $\mu$ be an ergodic probability measure.
By the multidimensional ergodic theorem, for $\mu$-almost every $\omega$,
we have
\[
\lim_{n\to\infty}\mathfrak{f}_{n,k}(\omega;p_k)=\mu([p_k]).
\]
Given two probability measures $\mu$ and $\nu$ on $\Omega$, recall that $\nu_k$ (resp. $\mu_k$) is the probability
measure induced by $\nu$ (resp. $\mu$) on $S^{ \Lambda_k}$ by projection.
The total variation distance between $\mu_k$ and $\nu_k$ is defined by
\[
\|\mu_k-\nu_k\|_{\tv}=\frac{1}{2}\sum_{p_k\in S^{ \Lambda_k}} |\mu([p_k])-\nu([p_k])|.
\]
We can now define the property of exponential rate of convergence for frequencies.
\begin{definition}[Exponential rate of convergence for frequencies]
\leavevmode\\
An ergodic probability measure $\mu$ on $(\Omega,\mathfrak{B})$ has the exponential rate of convergence
property for frequencies if, given $k$ and $\varepsilon>0$, there is a $\delta>0$ and an $N$ such that
\[
\mu\left\{\omega\in\Omega:  \|\mathfrak{f}_{n,k}(\omega;\cdot)-\mu_k\|_{\tv}\geq \varepsilon\right\}
\leq \e^{-(2n+1)^d \delta},\;\forall n\geq N.
\]
\end{definition}

We now prove that this property is implied by the blowing-up property.
\begin{proposition}
Let $\mu$ be an ergodic probability measure on $(\Omega,\mathfrak{B})$.
If $\mu$ has the blowing-up property, then it has the exponential rate of convergence
property for frequencies.
\end{proposition}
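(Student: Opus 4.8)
The plan is to run the standard Marton--Shields contrapositive: a collection of patterns whose empirical frequencies are all far from $\mu_k$ cannot carry too much mass, because otherwise its Hamming blow-up would be almost the whole space and yet still consist of configurations far from $\mu_k$, contradicting the pointwise ergodic theorem. Concretely, I fix $k$ and $\varepsilon>0$ and, for each $n>k$, set
\[
C_n=\big\{\omega\in S^{\Lambda_n}:\|\mathfrak{f}_{n,k}(\omega;\cdot)-\mu_k\|_{\tv}\geq\varepsilon\big\},
\]
so that $\mu([C_n])$ is exactly the quantity to be bounded. Since $\mathfrak{f}_{n,k}(\omega;\cdot)$ depends only on $\omega_{\Lambda_n}$, the set $C_n$ is well defined.

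The first and main technical step is a Lipschitz estimate showing that $\omega\mapsto\mathfrak{f}_{n,k}(\omega;\cdot)$ varies slowly in the Hamming distance. Changing $\omega$ at a single site $y$ alters the window $(\theta_x\omega)_{\Lambda_k}$ only for the at most $|\Lambda_k|=(2k+1)^d$ values of $x$ with $y\in x+\Lambda_k$, and each such alteration changes $\sum_{p_k}|\cdots|$ by at most $2$. Summing over the sites where $\omega$ and $\omega'$ differ, I obtain
\[
\|\mathfrak{f}_{n,k}(\omega;\cdot)-\mathfrak{f}_{n,k}(\omega';\cdot)\|_{\tv}\leq \frac{|\Lambda_k|}{(2(n-k)+1)^d}\,\db_{\Lambda_n}(\omega,\omega').
\]
Because $(2n+1)^d/(2(n-k)+1)^d\to 1$, for $n$ large this is at most $2|\Lambda_k|\,\varepsilon'$ whenever $\db_{\Lambda_n}(\omega,\omega')<\varepsilon'(2n+1)^d$. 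I therefore fix $\varepsilon'>0$ with $2|\Lambda_k|\,\varepsilon'<\varepsilon/2$; this is the $\varepsilon'$ I feed into the blowing-up property. Correctly matching the two normalizations $(2n+1)^d$ and $(2(n-k)+1)^d$ is the only delicate point, and is the step I expect to require the most care.

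Finally I assemble the argument. Applying the blowing-up property (Definition \ref{def:Bup}) with this $\varepsilon'$ yields $\delta'>0$ and $N'$. Suppose $n\geq N'$ and $\mu([C_n])\geq \e^{-(2n+1)^d\delta'}$; then $\mu(\langle C_n\rangle_{\varepsilon'})\geq 1-\varepsilon'$. For any $\omega'\in\langle C_n\rangle_{\varepsilon'}$ there is $\omega\in C_n$ with $\db_{\Lambda_n}(\omega,\omega')<\varepsilon'(2n+1)^d$, so the Lipschitz bound and the triangle inequality give $\|\mathfrak{f}_{n,k}(\omega';\cdot)-\mu_k\|_{\tv}>\varepsilon-\varepsilon/2=\varepsilon/2$ once $n$ is large. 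Hence $\mu\{\omega':\|\mathfrak{f}_{n,k}(\omega';\cdot)-\mu_k\|_{\tv}>\varepsilon/2\}\geq 1-\varepsilon'$. But the multidimensional ergodic theorem gives $\mathfrak{f}_{n,k}(\omega;\cdot)\to\mu_k$ in total variation for $\mu$-almost every $\omega$ (there being only finitely many patterns $p_k$), so this measure tends to $0$; thus for $n\geq N''$, with $N''$ chosen large enough to also ensure the normalization ratio is at most $2$, it falls below $1-\varepsilon'$, a contradiction. Therefore $\mu([C_n])<\e^{-(2n+1)^d\delta'}$ for all $n\geq N:=\max(N',N'')$, which is precisely the claim with $\delta=\delta'$.
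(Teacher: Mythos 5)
Your proposal is correct and is essentially the paper's own proof: your single-site Lipschitz estimate for the empirical frequencies is precisely the paper's Lemma \ref{lemma-shields} (with the same constant $(2k+1)^d/(2(n-k)+1)^d$, proved there in the appendix), and the rest of your argument---feeding the bad set into the blowing-up property and contradicting the multidimensional ergodic theorem---is the same contrapositive, differing only in immaterial constants (your $\varepsilon'<\varepsilon/(4(2k+1)^d)$ with ratio bound $2$ versus the paper's $\varrho=2\varepsilon/(5(2k+1)^d)$ with ratio bound $5/4$).
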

\begin{proof}
We adapt a proof of \cite{martonshields} to our setting.
Take $\varepsilon>0$ and $k\geq 0$, and for any $n\geq 0$ let
\[
\EuScript{B}(n,k,\varepsilon)=\left\{\omega\in\Omega:  \|\mathfrak{f}_{n,k}(\omega;\cdot)-\mu_k\|_{\tv}\geq \varepsilon\right\}.
\]
Note that we can naturally identify this subset of $\Omega$ with a subset of $S^{ \Lambda_n}$.
We have the following lemma whose proof is given in Appendix \ref{appendix:shields} below.
\begin{lemma}\label{lemma-shields}
Let $\varepsilon>0$ and $k\geq 0$, and define
\[
\varrho=\frac{2\varepsilon}{5(2k+1)^d}.
\]
There exists $\breve{N}>k$ such that, if $n\geq \breve{N}$ and $\db_{ \Lambda_n}(\omega,\eta)\leq \varrho\,(2n+1)^d$, then
\[
\|\mathfrak{f}_{n,k}(\omega;\cdot)-\mathfrak{f}_{n,k}(\eta;\cdot)\|_{\tv}\leq \frac{\varepsilon}{2}.
\]
\end{lemma}
The lemma implies that
\begin{equation}\label{pluie}
\langle \EuScript{B}(n,k,\varepsilon) \rangle_{\varrho}\subseteq \EuScript{B}\Big(n,k,\frac{\varepsilon}{2}\Big).
\end{equation}
Now, by the blowing-up property (Definition \ref{def:Bup}), there is a $\delta>0$ and an $N$ such that, if
$n\geq N$ and $C\subseteq S^{ \Lambda_n}$ so that $\mu([C])\geq \e^{-(2n+1)^d\delta}$, then
$\mu(\langle C\rangle_\varrho)\geq 1-\varrho\geq 1-\varepsilon$. Therefore, if $n\geq N$ and if we suppose
that $\mu\big(\EuScript{B}(n,k,\varepsilon) \big)\geq \e^{-(2n+1)^d \delta}$, then we get
\[
\mu\big(\langle\EuScript{B}(n,k,\varepsilon)\rangle_\varrho \big)\geq 1-\varepsilon.
\]
This bound together with the inclusion \eqref{pluie} implies that for all $n\geq \max(N,\breve{N})$
\[
\mu\Big( \EuScript{B}\Big(n,k,\frac{\varepsilon}{2}\Big)\Big)\geq 1-\varepsilon.
\]
But this contradicts the multidimensional ergodic theorem which ensures that, for each $\varepsilon>0$ and each $k$,
\[
\lim_{n\to\infty}  \mu\Big( \EuScript{B}\Big(n,k,\frac{\varepsilon}{2}\Big)\Big)=0.
\]
Indeed, we have
\[
\mu\Big( \EuScript{B}\Big(n,k,\frac{\varepsilon}{2}\Big)\Big)
\leq \sum_{p_k \in S^{\Lambda_k}} \mu\left\{\omega\in\Omega : |\mathfrak{f}_{n,k}(\omega;p_k)-\mu([p_k])|\geq
\frac{\varepsilon}{|S|^{(2k+1)^d}} \right\}
\]
and each term in this finite sum goes to $0$ as $n\to+\infty$.
The proposition is proved.
\end{proof}

\subsection{Exponential rate for frequencies implies positive relative entropy}\label{ERCFimpliesPDP}

We now have the following proposition.
\begin{proposition}\label{prop:dimanche}
Let $\mu$ be an ergodic probability measure on $(\Omega,\mathfrak{B})$. If it has the
exponential rate of convergence property for frequencies, then, given $\varepsilon>0$ and $k$, there
is a $\delta>0$ such that, if $\nu$ is an ergodic probability measure such that
$h_*(\nu|\mu)<\delta$ then $\|\nu_k-\mu_k\|_{\tv}<\varepsilon$.
\end{proposition}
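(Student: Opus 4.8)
The plan is to argue by contraposition: I will show that if $\|\nu_k-\mu_k\|_{\tv}\geq\varepsilon$, then $h_*(\nu|\mu)$ is bounded below by a positive constant depending only on $\varepsilon$ and $k$, so that choosing $\delta$ to be (half of) that constant yields the statement. The mechanism is that a measure $\nu$ whose $\Lambda_k$-marginal differs from $\mu_k$ produces configurations whose empirical pattern-frequencies are far from $\mu_k$; such configurations form a set that is $\nu$-typical but, by the exponential rate property, exponentially $\mu$-rare, and a set that is simultaneously $\nu$-large and $\mu$-exponentially-small forces the relative entropy to grow linearly in the volume.

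Concretely, first I would apply the exponential rate of convergence property with the pair $(k,\varepsilon/2)$ to obtain $\delta'>0$ and $N$ such that $\mu\{\omega:\|\mathfrak{f}_{n,k}(\omega;\cdot)-\mu_k\|_{\tv}\geq\varepsilon/2\}\leq \e^{-(2n+1)^d\delta'}$ for all $n\geq N$. Then I would introduce the event $A_n=\{\omega:\|\mathfrak{f}_{n,k}(\omega;\cdot)-\nu_k\|_{\tv}<\varepsilon/2\}$, noting that it is $\mathfrak{B}_{\Lambda_n}$-measurable because $\mathfrak{f}_{n,k}(\cdot;p_k)$ depends only on $\omega_{\Lambda_n}$. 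If $\|\nu_k-\mu_k\|_{\tv}\geq\varepsilon$, the triangle inequality gives $A_n\subseteq\{\omega:\|\mathfrak{f}_{n,k}(\omega;\cdot)-\mu_k\|_{\tv}>\varepsilon/2\}$, whence $\mu(A_n)\leq\e^{-(2n+1)^d\delta'}$. On the other hand, since $\nu$ is ergodic, the multidimensional ergodic theorem gives $\mathfrak{f}_{n,k}(\omega;p_k)\to\nu([p_k])$ for $\nu$-a.e.\ $\omega$ and each of the finitely many patterns $p_k$, so $\|\mathfrak{f}_{n,k}(\omega;\cdot)-\nu_k\|_{\tv}\to0$ $\nu$-a.s.\ and hence $\nu(A_n)\to1$; in particular $\nu(A_n)\geq1/2$ for all large $n$.

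The final step converts this into a lower bound on relative entropy via the data-processing (coarse-graining) inequality: collapsing $S^{\Lambda_n}$ onto the two-cell partition $\{A_n,A_n^{c}\}$ cannot increase relative entropy, so $H_n(\nu|\mu)\geq \nu(A_n)\log\frac{\nu(A_n)}{\mu(A_n)}+(1-\nu(A_n))\log\frac{1-\nu(A_n)}{1-\mu(A_n)}$. Feeding in $\nu(A_n)\geq1/2$ and $\mu(A_n)\leq\e^{-(2n+1)^d\delta'}$, and using that $x\log x\geq -\e^{-1}$ together with $1-\mu(A_n)\leq1$ to absorb the lower-order terms, yields $H_n(\nu|\mu)\geq\tfrac12(2n+1)^d\delta'-\Oun$. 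Dividing by $(2n+1)^d$ and taking the liminf gives $h_*(\nu|\mu)\geq\delta'/2$, so the choice $\delta=\delta'/2$ completes the contrapositive.

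I expect the only genuinely non-elementary ingredient, and hence the main point to get right, to be the relative-entropy lower bound via coarse-graining, together with the verification that $A_n$ is truly a cylinder event on $\Lambda_n$ (so that $H_n$, which compares the $\Lambda_n$-marginals, actually dominates the binary relative entropy of the two-cell partition). Everything else reduces to the triangle inequality, the ergodic theorem applied to $\nu$, and elementary estimates on the binary relative entropy.
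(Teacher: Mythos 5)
Your proof is correct, and it takes a genuinely different route from the paper's. The paper works in the ``forward'' direction, following Marton--Shields: assuming $H_n(\nu|\mu)/(2n+1)^d$ is small, it uses Markov's inequality together with an auxiliary $L^1$ bound on the log-likelihood ratio (Lemma \ref{lem:boundrelent}, proved in the appendix, which costs an additive $2/\e$) to extract a set $G_n$ of large $\nu$-measure on which the marginal densities are comparable up to a factor $\e^{\pm(2n+1)^d\rho}$; it then intersects $G_n$ with the $\nu$-typical set for frequencies, observes that the resulting set has $\mu$-measure at least $(1-2\rho)\e^{-(2n+1)^d\rho}$, which beats the exponential rate bound when $\rho$ is small, and so finds a \emph{single} configuration whose empirical frequencies are simultaneously close to $\nu_k$ and to $\mu_k$; the triangle inequality concludes. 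You instead argue by contraposition: if $\|\nu_k-\mu_k\|_{\tv}\geq\varepsilon$, the event $A_n$ (frequencies close to $\nu_k$) is $\nu$-large by the ergodic theorem but $\mu$-exponentially small by the exponential rate property plus the triangle inequality, and the two-cell data-processing (coarse-graining) inequality
\[
H_n(\nu|\mu)\;\geq\;\nu(A_n)\log\frac{\nu(A_n)}{\mu(A_n)}+\big(1-\nu(A_n)\big)\log\frac{1-\nu(A_n)}{1-\mu(A_n)}
\]
then forces $H_n(\nu|\mu)\geq\tfrac12(2n+1)^d\delta'-2/\e$, hence $h_*(\nu|\mu)\geq\delta'/2$. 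Your key tool (data processing on a binary partition) replaces the paper's key tools (Markov plus Lemma \ref{lem:boundrelent}), so your argument is shorter, bypasses the appendix lemma entirely, and yields an explicit $\delta=\delta'/2$ in terms of the exponential rate $\delta'$ associated with the threshold $\varepsilon/2$; it also correctly yields a bound valid for \emph{all} large $n$, so the $\liminf$ in $h_*$ causes no trouble. What the paper's longer route buys is the explicit construction of the sets $G_n$ and $\widetilde G_n$ with pointwise density control, which is the formulation used in the Marton--Shields circle of ideas the paper is deliberately importing; your checks that $A_n$ is $\mathfrak{B}_{\Lambda_n}$-measurable (so that $H_n$ dominates the binary relative entropy) and the elementary bounds $x\log x\geq-\e^{-1}$, $1-\mu(A_n)\leq 1$ are exactly the points that needed verification, and they are all sound.
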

\begin{proof}
We adapt a proof of \cite{martonshields}.
Let $\rho$ be a positive number that we will specify later on, and suppose that
\[
\frac{H_n(\nu|\mu)}{(2n+1)^d}<\frac{\rho^2}{2}.
\]
If $n$ is large enough, we have by Markov's inequality\footnote{Let $f\geq 0$ be an integrable function on a probability
space $(\mathcal{X},\Sigma,\mu)$. If $\int f \dd\mu\leq \rho^2$ then $f(x)\leq \rho$, except for a set
of measure at most $\rho$.} and Lemma \ref{lem:boundrelent} (see appendix below) that there exists
a set $G_n\subseteq S^{ \Lambda_n}$ such that
\[
\nu([G_n])>1-\rho
\]
and
\begin{equation}\label{eq:patisson}
\e^{-(2n+1)^d \rho} \nu([\omega_{ \Lambda_n}])\leq \mu([\omega_{ \Lambda_n}])
\leq \e^{(2n+1)^d \rho} \nu([\omega_{ \Lambda_n}]),\quad \omega\in [G_n].
\end{equation}
Now, for each fixed $k$, and for $n$ large enough, the multidimensional ergodic theorem applied to $\nu$ tells us
that there is a set $\widetilde{G}_n\subseteq G_n$ such that
\begin{equation}\label{eq:courge}
\nu\big([\widetilde{G}_n]\big)>1-2\rho
\end{equation}
\begin{equation}\label{eq:potimaron}
\|\mathfrak{f}_{n,k}(\omega;\cdot)-\nu_k\|_{\tv}< \rho,\quad \omega\in [\widetilde{G}_n].
\end{equation}
If $\mu$ has the exponential rate of convergence property for frequencies then
\[
\mu\left\{\omega\in\Omega:  \|\mathfrak{f}_{n,k}(\omega;\cdot)-\mu_k\|_{\tv}\geq \rho\right\}\leq \e^{-(2n+1)^d \tau}
\]
for some $\tau>0$ and all $n$ sufficiently large.
Now, conditions \eqref{eq:patisson} and \eqref{eq:courge} imply that
\[
\mu\big([\widetilde{G}_n]\big)>(1-2\rho)\e^{-(2n+1)^d \rho}.
\]
Therefore, if $\rho$ is small enough and $n$ large enough, there exists $\omega\in [\widetilde{G}_n]$
such that $\|\mathfrak{f}_{n,k}(\omega;\cdot)-\mu_k \|_{\tv}<\rho$.
Indeed, it is enough to check that
\[
\mu\big([\widetilde{G}_n]\big)>\mu\left\{\omega\in\Omega:  \|\mathfrak{f}_{n,k}(\omega;\cdot)-\mu_k\|_{\tv}\geq \rho\right\}
\]
which implies that
\[
\big[\widetilde{G}_n\big] \cap \left\{\omega\in\Omega:  \|\mathfrak{f}_{n,k}(\omega;\cdot)-\mu_k\|_{\tv}\geq \rho\right\}
\]
has strictly positive $\mu$-measure.
Since \eqref{eq:potimaron} holds for this same
$\omega$, we thus arrive by the triangle inequality at the estimate $\|\nu_k-\mu_k\|_{\tv}<2\rho$.
Therefore, we proved that, given $\varepsilon$ and $k$, if  $\nu$ is ergodic such that $h_*(\nu|\mu)<\varepsilon^2/8$, then $\|\nu_k-\mu_k\|_{\tv}<\varepsilon$,
which ends the proof.
\end{proof}

Now we can state the main proposition of this section.
\begin{proposition}
If $\mu$ has the exponential rate of convergence property for frequencies then it has the positive relative entropy property.
\end{proposition}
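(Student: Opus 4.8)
The plan is to argue by contradiction, using Proposition~\ref{prop:dimanche} as essentially the only input. Suppose $\mu$ has the exponential rate of convergence property for frequencies but fails to have the positive relative entropy property. By Definition~\ref{def:PREP}, this failure means there is an ergodic measure $\nu\neq\mu$ for which $h_*(\nu|\mu)$ is \emph{not} strictly positive. Since the lower relative entropy is always nonnegative (as recalled after \eqref{def-hdown-hup}), this forces $h_*(\nu|\mu)=0$.

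Next I would exploit that $\nu$ and $\mu$ are genuinely distinct probability measures on $\Omega=S^{\Zd}$. Since the Borel $\sigma$-algebra is generated by cylinder sets, two distinct measures must disagree on the law of some finite window: there exists $k$ such that the projections $\mu_k$ and $\nu_k$ onto $S^{\Lambda_k}$ differ, that is, $\varepsilon_0:=\|\nu_k-\mu_k\|_{\tv}>0$.

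Finally, I would apply Proposition~\ref{prop:dimanche} with this particular value of $k$ and with $\varepsilon=\varepsilon_0$. The proposition yields some $\delta>0$ with the property that any ergodic measure satisfying $h_*(\nu|\mu)<\delta$ must obey $\|\nu_k-\mu_k\|_{\tv}<\varepsilon_0$. But our $\nu$ satisfies $h_*(\nu|\mu)=0<\delta$, so we would conclude $\|\nu_k-\mu_k\|_{\tv}<\varepsilon_0=\|\nu_k-\mu_k\|_{\tv}$, an outright contradiction. Hence no such $\nu$ can exist, and $\mu$ has the positive relative entropy property.

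I do not expect a genuine obstacle here: all of the analytic content has already been packed into Proposition~\ref{prop:dimanche}, and what remains is the soft observation that two distinct measures separate at some finite level $k$, combined with the nonnegativity of $h_*$. The only point that must be stated carefully is the logical negation of the positive relative entropy property and the reduction from non-positivity of $h_*(\nu|\mu)$ to its vanishing.
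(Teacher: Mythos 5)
Your proof is correct and is essentially the paper's own argument: both proceed by contradiction, extracting from the failure of the positive relative entropy property an ergodic $\nu\neq\mu$ with $h_*(\nu|\mu)=0$ together with a level $k$ at which $\|\nu_k-\mu_k\|_{\tv}>0$, and then applying Proposition~\ref{prop:dimanche} with that $k$ and that separation to reach an immediate contradiction. The only difference is cosmetic: you spell out the two soft points the paper leaves implicit, namely that nonpositivity of $h_*(\nu|\mu)$ forces it to vanish, and that two distinct measures must already disagree on some finite projection.
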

\begin{proof}
Suppose that $\mu$ has not the positive relative entropy property, but satisfies the exponential rate of convergence property for
frequencies. We will obtain a contradiction. By assumption there is an ergodic probability measure $\widehat{\nu}\neq
\mu$ such that $h_*(\widehat{\nu}|\mu)=0$, and there is a $\hat{k}$ and an $\widehat{\varepsilon}>0$ such that
$\|\widehat{\nu}_{\hat{k}}-\mu_{\hat{k}}\|_{\tv}\geq \widehat{\varepsilon}$. Now we apply Proposition \ref{prop:dimanche}.
By the exponential rate of convergence property for frequencies, given $\hat{\varepsilon}$ and $\hat{k}$, there is a $\hat{\delta}>0$
such that if $\nu$ satisfies $h_*(\nu|\mu)<\hat{\delta}$ then $\|\nu_{\hat{k}}-\mu_{\hat{k}}\|_{\tv}<\widehat{\varepsilon}$.
We can take $\nu=\widehat{\nu}$, hence we arrive at $\|\widehat{\nu}_{\hat{k}}-\mu_{\hat{k}}\|_{\tv}< \widehat{\varepsilon}$, a
contradiction.
\end{proof}

\section{A proof via large deviations}

We present another proof of the main theorem, based on large deviations.
Suppose that $\mu$ is an ergodic equilibrium state which satisfies a Gaussian concentration bound.
Now assume that there exists $\mu'\in \mathrm{ex}\, \EuScript{G}_\Theta(\Phi)$ such that $\mu\neq \mu'$. We are going to
arrive at a contradiction. Notice that we can suppose that $\mu'$ is ergodic without loss of generality, because of the ergodic decomposition
\cite[Theorem 14.17, p. 298]{Geo}, and the fact that the map $\nu\mapsto h(\nu|\Phi)$, $\nu\in \EuScript{M}_\Theta(\Omega)$, is affine (which is
a consequence of \cite[Theorem 15.20, p. 318]{Geo} and \eqref{formula-RE}).

Since $\mu\neq \mu'$, there exists a local function $f:\Omega\to\R$ such that $\E_\mu(f) \neq \E_{\mu'}(f)$.
Without loss of generality, assume that $\E_{\mu'}(f)>\E_{\mu}(f)$. So there exists $\varepsilon>0$ such that
\[
\E_{\mu}(f)+\varepsilon=\E_{\mu'}(f).
\]
We want to apply Proposition \ref{prop-exp-dev} to $F=S_n f$, where, for each $n\geq 0$, $S_n f=\sum_{x\in\Lambda_n}
f\circ\theta_x$. We claim that
\[
\|\ushort{\gdelta} (S_n f)\|_2^2\leq (2n+1)^d \|\ushort{\gdelta}(f)\|_1^2
\]
where $\|\ushort{\gdelta}(f)\|_1:=\sum_{x\in\Zd} \delta_x(f)$ (which is finite since $f$ is local). See Appendix \ref{est}
for the proof.
Letting
\[
\mathcal{E}_{n,\varepsilon}:=\left\{ \omega\in\Omega : \frac{S_n f(\omega)}{(2n+1)^d}\geq \E_{\mu}(f)+\frac{\varepsilon}{3} \right\}.
\]
we get
\begin{equation}\label{voldev}
\mu(\mathcal{E}_{n,\varepsilon})\leq\exp\left(-\frac{(2n+1)^d \varepsilon^2}{36D \|\ushort{\gdelta}(f)\|_1^2}\right).
\end{equation}
Now let
\[
\mathcal{E}'_{n,\varepsilon}:=\left\{ \omega\in\Omega :
\frac{S_n f(\omega)}{(2n+1)^d}\in \left]\E_{\mu'}(f)-\frac{\varepsilon}{3},\E_{\mu'}(f)+\frac{\varepsilon}{3} \right[\right\}
\]
Since
\[
\mathcal{E}'_{n,\varepsilon}\subset \mathcal{E}_{n,\varepsilon}
\]
we deduce that
\begin{equation}\label{candya}
\limsup_{n\to+\infty} \frac{1}{(2n+1)^d}\log\mu(\mathcal{E}'_{n,\varepsilon}) \leq -\frac{\varepsilon^2}{36D \|\ushort{\gdelta}(f)\|_1^2}<0.
\end{equation}
Now we use the large deviation principle satisfied by $\mu$ (see \cite[Section 15.5]{Geo}) which implies that
\begin{equation}\label{lanuit}
\liminf_{n\to+\infty}  \frac{1}{(2n+1)^d}\log\mu(\mathcal{E}'_{n,\varepsilon}) \geq
-\inf_{u\,\in\, \left]\E_{\mu'}(f)-\frac{\varepsilon}{3},\,\E_{\mu'}(f)+\frac{\varepsilon}{3} \right[ } \,I_f(u)
\end{equation}
where
\[
I_f(u)=\inf\left\{h(\nu|\mu) : \nu\in \EuScript{M}_\Theta(\Omega), \E_{\nu}(f)=u\right\}.
\]
The right-hand side of \eqref{lanuit} is larger than $-I_f(v)$ for any value of $v$ taken in the interval, so in particular it is larger than
$-I_f(\E_{\mu'}(f))$, which is equal to $0$ because $h(\mu'|\mu)=0$ by Theorem \ref{thm:vp}. Hence we obtain
\[
\liminf_{n\to+\infty}  \frac{1}{(2n+1)^d}\log\mu(\mathcal{E}'_{n,\varepsilon}) =0
\]
which contradicts \eqref{candya}.

\appendix

\section{Appendix}

\subsection{An estimate}\label{est}

We first recall Young's inequality for convolutions \cite[p. 316]{bullen}.
Let $\ushort{u}=(u_x)_{x\in\Zd}$ and $\ushort{v}=(v_x)_{x\in\Zd}$.
Formally define their convolution $\ushort{u}* \ushort{v}$ by
\[
(\ushort{u}* \ushort{v})_x=\sum_{y\in\Zd} u_{x-y} v_y,\;x\in\Zd.
\]
If $\ushort{u}\in \ell^p(\Zd)$ and $\ushort{v}\in\ell^q(\Zd)$, where $p,q\geq 1$, then $\ushort{u}*
\ushort{v}\in\ell^r(\Zd)$ where $r\geq 1$ is such that $1+r^{-1}=p^{-1}+q^{-1}$, then we have
\[
\|\ushort{u}* \ushort{v}\|_{\ell^r(\Zd)}\leq
\|\ushort{u}\|_{\ell^p(\Zd)}\|\ushort{v}\|_{\ell^q(\Zd)}.
\]

\begin{lemma}\label{lem:deltasums}
Let $f:\Omega\to\R$ such that $\|\ushort{\gdelta}(f)\|_1:=\sum_{x\in\Zd} \delta_x(f)<+\infty$.
Then for any $\Lambda\Subset\Z^d$ we have
\[
\| \ushort{\gdelta}(S_\Lambda f)\|_2^2\leq |\Lambda | \, \| \ushort{\gdelta}(f)\|_1^2.
\]
\end{lemma}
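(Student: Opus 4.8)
The plan is to reduce the whole estimate to a single application of Young's convolution inequality recalled just above. The first step is a bookkeeping computation relating the oscillation of a translate of $f$ to the oscillations of $f$ itself. Since $(\theta_x\omega)_z=\omega_{z-x}$, two configurations $\omega,\omega'$ that differ only at the site $y$ are mapped by $\theta_x$ to two configurations $\theta_x\omega,\theta_x\omega'$ that differ only at the site $x+y$. Taking the supremum of $|f(\theta_x\omega)-f(\theta_x\omega')|$ over all such pairs therefore gives
\[
\delta_y(f\circ\theta_x)=\delta_{x+y}(f),\qquad x,y\in\Zd.
\]

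Next I would invoke the subadditivity of the functional $\delta_y(\cdot)$, which is immediate from its definition as a supremum of absolute differences. Writing $S_\Lambda f:=\sum_{x\in\Lambda} f\circ\theta_x$, this yields
\[
\delta_y(S_\Lambda f)\leq \sum_{x\in\Lambda}\delta_y(f\circ\theta_x)=\sum_{x\in\Lambda}\delta_{x+y}(f)=:a_y.
\]
Because both $\delta_y(S_\Lambda f)$ and $a_y$ are nonnegative, squaring and summing over $y\in\Zd$ gives $\|\ushort{\gdelta}(S_\Lambda f)\|_2^2\leq \sum_{y\in\Zd} a_y^2=\|\ushort{a}\|_2^2$, so it suffices to bound $\|\ushort{a}\|_2$.

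The key observation is that $\ushort{a}$ is a convolution. Substituting $z=x+y$ one has $a_y=\sum_{z\in\Zd}\un_{-\Lambda}(y-z)\,\delta_z(f)=(\ushort{u}*\ushort{v})_y$ in the paper's convention $(\ushort{u}*\ushort{v})_x=\sum_y u_{x-y}v_y$, where $\ushort{u}=\un_{-\Lambda}$ is the indicator of $-\Lambda=\{-x:x\in\Lambda\}$ and $\ushort{v}=\ushort{\gdelta}(f)$. I would then apply Young's inequality with the exponents $p=2$, $q=1$, $r=2$ (which indeed satisfy $1+r^{-1}=p^{-1}+q^{-1}$), obtaining
\[
\|\ushort{a}\|_2=\|\ushort{u}*\ushort{v}\|_{\ell^2(\Zd)}\leq \|\ushort{u}\|_{\ell^2(\Zd)}\,\|\ushort{v}\|_{\ell^1(\Zd)}=\sqrt{|\Lambda|}\;\|\ushort{\gdelta}(f)\|_1,
\]
using $\|\un_{-\Lambda}\|_{\ell^2(\Zd)}^2=|{-\Lambda}|=|\Lambda|$. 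Squaring and combining with the previous paragraph gives precisely $\|\ushort{\gdelta}(S_\Lambda f)\|_2^2\leq |\Lambda|\,\|\ushort{\gdelta}(f)\|_1^2$.

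There is no genuine obstacle in this argument. The only point requiring care is the index bookkeeping in the first and third steps, namely correctly tracking how $\theta_x$ relocates the site of difference (giving $\delta_{x+y}(f)$ rather than, say, $\delta_{y-x}(f)$) and how the substitution $z=x+y$ turns the finite sum into a convolution against the indicator of $-\Lambda$. The passage from $\Lambda$ to $-\Lambda$ is harmless, since reflection leaves the $\ell^2$ norm of an indicator unchanged, so the constant is $\sqrt{|\Lambda|}$ in either case.
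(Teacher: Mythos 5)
Your proof is correct and is essentially the paper's own argument: subadditivity of the oscillation under sums, the translation identity for $\delta_y(f\circ\theta_x)$, and then Young's inequality with $(p,q,r)=(2,1,2)$ applied to the convolution of an indicator function with $\ushort{\gdelta}(f)$. The only cosmetic difference is that your careful bookkeeping under the convention $(\theta_x\omega)_y=\omega_{y-x}$ gives $\delta_y(S_\Lambda f)\leq\sum_{x\in\Lambda}\delta_{x+y}(f)$, i.e.\ convolution with $\un_{-\Lambda}$, while the paper writes $\delta_{z-x}(f)$, i.e.\ convolution with $\un_{\Lambda}$; the two are related by reflection and yield the identical bound $|\Lambda|\,\|\ushort{\gdelta}(f)\|_1^2$.
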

\begin{proof}
Since $\delta_z(S_\Lambda f)\leq \sum_{x\in \Lambda}\delta_{z-x}(f)$, we apply Young's inequality
with $r=2, p=2,q=1$, $u_x=\un_{\Lambda}(x)$, and $v_x=\delta_x(f)$ to get the desired estimate.
\end{proof}

\subsection{Proof  Lemma \ref{lemma-shields}}\label{appendix:shields}

The version of this lemma in dimension $d=1$ is stated without proof in \cite{martonshields}.
Since it is not completely obvious, we give it here for any $d\geq 1$.

We fix $\varepsilon>0$ and $k\geq 0$.
The frequency of a pattern $p_k\in S^{\Lambda_k}$ in $\omega$ (see \eqref{def:empfreq}) can rewritten as
\[
\mathfrak{f}_{n,k}(\omega;p_k)=\frac{1}{(2(n-k)+1)^d}
\sum_{x\in   \Lambda_{n-k}} \un_{\{(\theta_x \omega)_{ \Lambda_k}=p_k\}}.
\]
By definition we have
\begin{align}
\nonumber
& \|\mathfrak{f}_{n,k}(\omega;\cdot)- \mathfrak{f}_{n,k}(\eta;\cdot)\|_{\tv}=\\
&\frac{1}{2(2(n-k)+1)^d}\sum_{p_k \in S^{ \Lambda_k}}
\left| \sum_{x\in   \Lambda_{n-k}} \big(\un_{\{(\theta_x \omega)_{ \Lambda_k}=p_k\}} -\un_{\{(\theta_x \eta)_{ \Lambda_k}=p_k\}}\big)\right|.
\label{kusmitea}
\end{align}
Letting
\[
\mathcal{I}_{\omega,\eta,n}=\big\{ x\in  \Lambda_{n-k}: (\theta_x \omega)_{ \Lambda_k}=(\theta_x \eta)_{ \Lambda_k}\big\}
\]
we get
\begin{align}
\nonumber
\MoveEqLeft[4] \sum_{p_k \in S^{ \Lambda_k}}\left| \sum_{x\in   \Lambda_{n-k}}
\big(\un_{\{(\theta_x \omega)_{ \Lambda_k}=p_k\}} -\un_{\{(\theta_x \eta)_{ \Lambda_k}=p_k\}}\big)\right|\\
\nonumber
&= \sum_{p_k \in S^{ \Lambda_k}}
\left| \sum_{x\in \mathcal{I}_{\omega,\eta,n}^c}
\big(\un_{\{(\theta_x \omega)_{ \Lambda_k}=p_k\}} -\un_{\{(\theta_x \eta)_{ \Lambda_k}=p_k\}}\big)\right|\\
\nonumber
& \leq \sum_{p_k \in S^{ \Lambda_k}}
\sum_{x\in \mathcal{I}_{\omega,\eta,n}^c}
\big|\un_{\{(\theta_x \omega)_{ \Lambda_k}=p_k\}} -\un_{\{(\theta_x \eta)_{ \Lambda_k}=p_k\}}\big|\\
\nonumber
& \leq \sum_{p_k \in S^{ \Lambda_k}}
\sum_{x\in \mathcal{I}_{\omega,\eta,n}^c}
\big(\un_{\{(\theta_x \omega)_{ \Lambda_k}=p_k\}} +\un_{\{(\theta_x \eta)_{ \Lambda_k}=p_k\}}\big)\\
\nonumber
& = \sum_{x\in \mathcal{I}_{\omega,\eta,n}^c}
\sum_{p_k \in S^{ \Lambda_k}}\big(\un_{\{(\theta_x \omega)_{ \Lambda_k}=p_k\}} +\un_{\{(\theta_x \eta)_{ \Lambda_k}=p_k\}}\big)\\
\nonumber
& = 2 \,\big| \mathcal{I}_{\omega,\eta,n}^c\big|.
\end{align}
Hence we obtain from \eqref{kusmitea}
\begin{equation}\label{house}
\|\mathfrak{f}_{n,k}(\omega;\cdot)- \mathfrak{f}_{n,k}(\eta;\cdot)\|_{\tv}
\leq \frac{\big| \mathcal{I}_{\omega,\eta,n}^c\big|}{(2(n-k)+1)^d}.
\end{equation}
We now look for an upper bound for $\big| \mathcal{I}_{\omega,\eta,n}^c\big|$.
If $(\theta_x \omega)_{ \Lambda_k}=p_k$ and $(\theta_x \eta)_{ \Lambda_k}\neq p_k$, then
$\omega_y\neq \eta_y$ for at least one site $y\in  \Lambda_k+x$. Such a $y$ can produce as
many as $(2k+1)^d$ sites such that $(\theta_x \omega)_{ \Lambda_k}=p_k$ and $(\theta_x \eta)_{ \Lambda_k}\neq p_k$.
Hence
\begin{align*}
\big| \mathcal{I}_{\omega,\eta,n}^c\big|
& \leq (2k+1)^d \,\big|\{x\in  \Lambda_{n-k}: \omega_x\neq \eta_x  \}\big| \\
& \leq (2k+1)^d \, \big|\{x\in  \Lambda_{n}: \omega_x\neq \eta_x  \}\big| \\
& \leq (2k+1)^d  \, \db_{ \Lambda_n}(\omega,\eta).
\end{align*}
Hence \eqref{house} yields
\[
\|\mathfrak{f}_{n,k}(\omega;\cdot)- \mathfrak{f}_{n,k}(\eta;\cdot)\|_{\tv}
\leq \frac{(2k+1)^d}{(2(n-k)+1)^d} \,\db_{ \Lambda_n}(\omega,\eta).
\]
Obviously there exists $\breve{N}>k$ such that for all $n\geq \breve{N}$ we have
\[
\left(\frac{2n+1}{2(n-k)+1}\right)^d\leq \frac{5}{4}
\]
therefore, if we take
\[
\db_{ \Lambda_n}(\omega,\eta)\leq \frac{2\varepsilon}{5(2k+1)^d}\, (2n+1)^d
\]
we finally obtain
\[
\|\mathfrak{f}_{n,k}(\omega;\cdot)- \mathfrak{f}_{n,k}(\eta;\cdot)\|_{\tv}
\leq \frac{\varepsilon}{2}
\]
for all $n\geq \breve{N}$, which concludes the proof of the lemma.

\subsection{A bound on relative entropy}

Recall that `$\log$' stands for the natural logarithm. We were not able to find a reference for a proof of the following estimate, so we prove it for the reader's convenience.

\begin{lemma}\label{lem:boundrelent}
Let $\nu$ and $\mu$ be probability measures on a finite set $A$. Then
\begin{equation}\label{eq:oeuf}
\sum_{a\,\in A} \nu(\{a\}) \left| \log\frac{\nu(\{a\})}{\mu(\{a\})}\right|\leq H(\nu|\mu)+\frac{2}{\e}
\end{equation}
where
\[
H(\nu|\mu)=\sum_{a\,\in A} \nu(\{a\}) \log\frac{\nu(\{a\})}{\mu(\{a\})}.
\]
\end{lemma}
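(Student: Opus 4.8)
The plan is to reduce the inequality to controlling the ``negative part'' of $\log(\nu/\mu)$ and then apply a single elementary one-variable estimate. Write $p_a=\nu(\{a\})$ and $q_a=\mu(\{a\})$, and set $x_a=\log(p_a/q_a)$, with the usual conventions $0\log 0=0$. If $q_a=0$ while $p_a>0$ for some $a$, then both sides of \eqref{eq:oeuf} equal $+\infty$ and there is nothing to prove, so I may assume $\nu$ is absolutely continuous with respect to $\mu$. The first step is the pointwise identity $|x|=x+2\max(-x,0)$; multiplying by $p_a$ and summing over $a$ gives
\[
\sum_{a\in A} p_a|x_a| = \sum_{a\in A} p_a x_a + 2\sum_{a\in A} p_a\max(-x_a,0) = H(\nu|\mu)+2\sum_{a:\,q_a>p_a} p_a\log\frac{q_a}{p_a},
\]
where I used that $\max(-x_a,0)=\max(\log(q_a/p_a),0)$ is nonzero exactly on the set $\{q_a>p_a\}$. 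Thus it suffices to show that this last sum is at most $1/e$.

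For the key step I would bound each summand separately using the elementary inequality $-t\log t\leq 1/e$ for $t\in(0,1]$, whose maximum over this range is attained at $t=1/e$ (differentiate $t\mapsto -t\log t$ and check the critical point). Applying it with $t=p_a/q_a\in(0,1)$ on the set $\{q_a>p_a\}$ yields $p_a\log(q_a/p_a)=q_a\,(-t\log t)\leq q_a/e$. Summing and using $\sum_a q_a=1$ gives
\[
\sum_{a:\,q_a>p_a} p_a\log\frac{q_a}{p_a} \leq \frac{1}{e}\sum_{a:\,q_a>p_a} q_a \leq \frac{1}{e},
\]
which, combined with the previous display, is exactly \eqref{eq:oeuf}.

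There is essentially no serious obstacle here: the argument is a reduction to a scalar maximization. The only points that deserve care are the degenerate terms (the $p_a=0$ terms vanish by convention, and the case $q_a=0<p_a$ makes the statement vacuous) and the direction of the scalar bound. The one mild subtlety worth making explicit is the decomposition $|x|=x+2\max(-x,0)$: it isolates precisely the part of $\sum_a p_a|x_a|$ that is \emph{not} already captured by $H(\nu|\mu)$, so that the entire estimate collapses to the single inequality $\max_{0<t\le 1}(-t\log t)=1/e$.
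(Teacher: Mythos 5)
Your proof is correct, and it follows the paper's decomposition exactly: both split $|x|=x+2\max(-x,0)$, so that the left-hand side equals $H(\nu|\mu)$ plus twice the sum $\sum_{a:\,\mu(\{a\})>\nu(\{a\})}\nu(\{a\})\log\bigl(\mu(\{a\})/\nu(\{a\})\bigr)$, and the whole issue is to bound this sum by $1/\e$. Where you diverge is in how that bound is obtained. The paper first aggregates the sum via Jensen's inequality (concavity of $\log$), obtaining $\nu(A^{\scriptscriptstyle -})\log\bigl(\mu(A^{\scriptscriptstyle -})/\nu(A^{\scriptscriptstyle -})\bigr)$ with $A^{\scriptscriptstyle -}$ the set where the log-ratio is negative, then bounds $\mu(A^{\scriptscriptstyle -})\leq 1$ and applies $-x\log x\leq \e^{-1}$ once, to the single number $\nu(A^{\scriptscriptstyle -})$. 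You instead apply the same scalar inequality $-t\log t\leq \e^{-1}$ termwise, writing $\nu(\{a\})\log\bigl(\mu(\{a\})/\nu(\{a\})\bigr)=\mu(\{a\})\bigl(-t_a\log t_a\bigr)$ with $t_a=\nu(\{a\})/\mu(\{a\})$, and then sum using $\sum_a \mu(\{a\})\leq 1$. The two routes land on the same constant $2/\e$, so neither buys a sharper estimate; yours is marginally more elementary in that it dispenses with Jensen's inequality altogether, reducing everything to the one-variable maximization, while the paper's aggregation step is the more standard reflex and generalizes smoothly when one wants bounds in terms of set masses like $\nu(A^{\scriptscriptstyle -})$. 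Your handling of the degenerate cases ($\nu(\{a\})=0$ by convention, and $\mu(\{a\})=0<\nu(\{a\})$ making both sides infinite) is also sound.
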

\begin{proof}
Define
\[
A^{\scriptscriptstyle{-}}=\left\{a\in A: \log\frac{\nu(\{a\})}{\mu(\{a\})}<0\right\}.
\]
Now
\begin{align*}
\sum_{a\,\in A} \nu(\{a\}) \left| \log\frac{\nu(\{a\})}{\mu(\{a\})}\right|
&= \sum_{a\,\in A\backslash A^{\scriptscriptstyle{-}}} \nu(\{a\}) \log\frac{\nu(\{a\})}{\mu(\{a\})}
+ \sum_{a\,\in A^{\scriptscriptstyle{-}}} \nu(\{a\}) \log\frac{\mu(\{a\})}{\nu(\{a\})}\\
&= H(\nu|\mu)+ 2\sum_{a\,\in A^{\scriptscriptstyle{-}}} \nu(\{a\}) \log\frac{\mu(\{a\})}{\nu(\{a\})}.
\end{align*}
By the concavity of the logarithm function and Jensen's inequality we get
\begin{align*}
\sum_{a\,\in A^{\scriptscriptstyle{-}}} \nu(\{a\}) \log\frac{\mu(\{a\})}{\nu(\{a\})}
&= \nu(A^{\scriptscriptstyle{-}}) \sum_{a\,\in A^{\scriptscriptstyle{-}}} \frac{\nu(\{a\})}{\nu(A^{\scriptscriptstyle{-}}) } \log\frac{\mu(\{a\})}{\nu(\{a\})}\\
&\leq  \nu(A^{\scriptscriptstyle{-}}) \log\left(\frac{\mu(A^{\scriptscriptstyle{-}})}{\nu(A^{\scriptscriptstyle{-}})}\right)\\
&\leq  \nu(A^{\scriptscriptstyle{-}}) \log\left(\frac{1}{\nu(A^{\scriptscriptstyle{-}})}\right)\leq \e^{-1}
\end{align*}
where we used the elementary inequality $-x\log x\leq \e^{-1}$, $x\geq 0$.
Therefore we arrive at \eqref{eq:oeuf}.
\end{proof}

\noindent\textbf{Acknowledgements.}
We thank Pierre Collet for stimulating discussions. The authors also thank the anonymous referee for very useful comments.



\begin{thebibliography}{99}


\bibitem{vdberg-steif}
J. van den Berg, J. Steif.
On the Existence and Nonexistence of Finitary Codings for a Class of Random Fields.
Ann. Probab. \textbf{27} (1999), no. 3, 1501--1522.

\bibitem{blm}
S. Boucheron, G. Lugosi, P. Massart.
\textit{Concentration inequalities.}
A nonasymptotic theory of independence. Oxford University Press, 2013.

\bibitem{Borgs}
C. Borgs,
Translation symmetry breaking in four dimensional lattice gauge theories.
 Comm. Math. Phys. \textbf{96} (1984), 251--284.
 
\bibitem{bullen}
P.~Bullen.
\textit{Dictionary of inequalities}.
Monographs and Research Notes in Mathematics. CRC Press, second edition, 2015.

\bibitem{CCKR}
J.-R. Chazottes, P. Collet, C. K\"ulske, F. Redig.
Concentration inequalities for random fields via coupling.
Probab. Theory Related Fields  \textbf{137}  (2007),  no. 1-2, 201--225.

\bibitem{CCR}
J.-R. Chazottes, P. Collet, F. Redig.
On concentration inequalities and their applications for Gibbs measures in lattice systems.
J. Stat. Phys. \textbf{169} (2017), issue 3, pp 504--546.

\bibitem{CGT}
J.-R. Chazottes, S. Gallo, D. Takahashi.
Optimal Gaussian concentration bounds for stochastic chains of unbounded memory. Preprint, 2019.

\bibitem{DS1985}
R. L. Dobrushin, S. Shlosman. Completely analytical Gibbs fields. Statistical physics and dynamical systems (K\"oszeg, 1984), 371--403,
Progr. Phys. \textbf{10}, Birkh\"auser, 1985. 

\bibitem{DS1987}
R. L. Dobrushin, S. Shlosman.
Completely analytical interactions: constructive description. J. Statist. Phys. \textbf{46} (1987), no. 5-6, 983--1014.

\bibitem{dp}
D.P. Dubhashi, A. Panconesi.
\textit{Concentration of Measure for the Analysis of Randomized Algorithms.}
Cambridge University Press, 2009.

\bibitem{ellis}
R. Ellis.
\textit{Entropy, large deviations, and statistical mechanics.}
Classics in Mathematics. Springer-Verlag, Berlin, 2006.

\bibitem{EKW}
A. Eizenberg, Y. Kifer, and B. Weiss.
Large deviations for $\Zd$-actions.
Comm. Math. Phys. \textbf{164} (1994), 433-454.

\bibitem{FVbook}
S. Friedli, Y. Velenik.
\textit{Statistical Mechanics of Lattice Systems: A Concrete Mathematical Introduction.}
Cambridge University Press, 2017.

\bibitem{Geo}
H.-O. Georgii.
\textit{Gibbs Measures and Phase Transitions}.
Second edition. Walter de Gruyter, 2011.

\bibitem{keller}
G. Keller.
\textit{Equilibrium States in Ergodic Theory}. London Mathematical Society Student Texts vol. 42, 1998.

\bibitem{Kul}
C. K\"ulske.
Concentration inequalities for functions of Gibbs fields
with applications to diffraction and random Gibbs measures.
Comm. Math. Phys. \textbf{239}, 29--51 (2003).

\bibitem{ledoux}
M. Ledoux.
\textit{The concentration of measure phenomenon}, Mathematical Surveys and Monographs \textbf{89}.
American Mathematical Society, Providence R.I., 2001.

\bibitem{moles}
J. Moles.
On concentration inequalities for lattice systems and thermodynamic formalism. PhD thesis, Institut Polytechnique de Paris, 2020. 

\bibitem{le-ny-survey}
A. Le Ny.
Gibbs Measures for Long-Range Ising Models (2019). arXiv:1911.01860 [math-ph].


\bibitem{MOS}
F. Martinelli, E. Olivieri, R. H.  Schonmann.
For 2-D lattice spin systems weak mixing implies strong mixing. 
Comm. Math. Phys. \textbf{165} (1994), no. 1, 33--47. 

\bibitem{martonshields}
K. Marton, P. Shields.
The positive-divergence and blowing-up properties.
Israel J. Math. (1994) \textbf{86}, Issue 1--3, pp 331--348.

\bibitem{ruelle2004}
D. Ruelle, D. (2004).
\textit{Thermodynamic Formalism: The Mathematical Structure of Equilibrium Statistical Mechanics}. Cambridge University Press,
2004.

\bibitem{SZ1}
D. Stroock, B. Zegarli\'nski.
The equivalence of the logarithmic Sobolev inequality and the Dobrushin-Shlosman mixing condition.
Comm. Math. Phys. \textbf{144} (1992), no. 2, 303--323. 

\bibitem{SZ2}
D. Stroock, B. Zegarli\'nski.
The logarithmic Sobolev inequality for discrete spin systems on a lattice. 
Comm. Math. Phys. \textbf{149} (1992), no. 1, 175--193. 

\bibitem{Wbook}
M. Wainwright.
\textit{High-Dimensional Statistics: A Non-Asymptotic Viewpoint}.
(Cambridge Series in Statistical and Probabilistic Mathematics). Cambridge University Press, 2019.

\end{thebibliography}
\end{document}